\numberwithin{equation}{section}
\theoremstyle{definition}
\newtheorem*{rep@theorem}{\rep@title}
\newcommand{\newreptheorem}[2]{%
\newenvironment{rep#1}[1]{%
 \def\rep@title{#2 \ref{##1}}%
 \begin{rep@theorem}}%
 {\end{rep@theorem}}}
\newtheorem{theorem}{Theorem}[section]
\newtheorem{corollary}[theorem]{Corollary}
\newtheorem{lemma}[theorem]{Lemma}
\newtheorem*{theorem*}{Theorem}
\newtheorem*{proposition*}{Proposition}
\newtheorem{definition}[theorem]{Definition}
\newtheorem{remark}[theorem]{Remark}
\newtheorem*{claim*}{Claim}
\newtheorem{conjecture}[theorem]{Conjecture}
\newtheorem*{conjecture*}{Conjecture}
\newtheorem*{observation*}{Observation}
\newtheorem*{question*}{Question}
\newcommand{\diag}{{\rm diag}}
\newcommand{\Cb}{{\mathbb C}}
\newcommand{\Fb}{{\mathbb F}}
\newcommand{\Nb}{{\mathbb N}}
\newcommand{\Qb}{{\mathbb Q}}
\newcommand{\Rb}{{\mathbb R}}
\newcommand{\Tb}{{\mathbb T}}
\newcommand{\Zb}{{\mathbb Z}}
\newcommand{\supp}{{\rm supp}}
\begin{document}

\title{Strongly Independent Matrices and Rigidity of $\times A$-Invariant Measures on $n$-Torus}

\date{\today}
\author{Huichi Huang}
\address{Huichi Huang, College of Mathematics and Statistics, Chongqing University, Chongqing, 401331, P.R. China}
\email{huanghuichi@cqu.edu.cn}
\thanks{Huichi Huang was partially supported by NSFC no. 11871119 and Chongqing Municipal Science and Technology Commission fund no. cstc2018jcyjAX0146.}
\author{Hanfeng Li}
\address{Hanfeng Li, Center of Mathematics, Chongqing University, Chongqing 401331, P.R. China
and
Department of Mathematics, SUNY at Buffalo, Buffalo, NY 14260-2900, USA}
\email{hfli@math.buffalo.edu}
\thanks{Hanfeng Li was partially supported by NSF grant DMS-1900746.}

\author{Enhui Shi}
\thanks{Enhui Shi was partially supported by NSFC no. 11771318 and no. 11790274.}
\address{Enhui Shi, School of mathematical science, Soochow University, Suzhou, 215006, P.R. China}
\email{ehshi@suda.edu.cn}

\author{Hui Xu}
\address{Hui Xu, School of mathematical science, Soochow University, Suzhou, 215006, P.R. China}
\email{20184007001@stu.suda.edu.cn}

\keywords{Strongly independent, ergodicity, mixing, Fourier coefficient, measure rigidity}
\subjclass[2020]{Primary 37A05, 37A25, 37A46, 43A05, 28C10, 12E05}
\begin{abstract}
We introduce the concept of strongly independent matrices over any field, and prove the existence of such matrices for certain fields and the non-existence for algebraically closed fields.  Then we apply strongly independent matrices over rational numbers to obtain a measure rigidity result for endomorphisms on $n$-torus.
\end{abstract}
\maketitle

\section{Introduction}

For an integer $m$, the $\times m$ map  $T_m$  on $\Tb=\{z\in\Cb:|z|=1\}$ is given by
$T_m(z)=z^m$ for all $z\in\Tb$.

H. Furstenberg proved that under the action of a non-lacunary multiplicative semigroup of positive integers on $\Tb$, a closed invariant subset of $\Tb$ containing a dense orbit  is either finite or the whole $\Tb$~\cite[Theorem IV.1]{F}. Here a multiplicative semigroup of positive integers is called {\bf non-lacunary} if it is not contained in any singly generated multiplicative semigroup. In other words a non-lacunary multiplicative semigroup of positive integers always contains two positive integers $p$ and $q$  with $\frac{\log p}{\log q}$ irrational~(we say that $p,q$ are non-lacunary).

Furthermore, Furstenberg  conjectured the following.

\begin{conjecture}[Furstenberg's Conjecture]\

An ergodic invariant Borel probability  measure on $\Tb$  under the action of a non-lacunary multiplicative semigroup of positive integers is either finitely supported or the Lebesgue measure.
\end{conjecture}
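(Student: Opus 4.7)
This is famously Furstenberg's open conjecture, so any genuine proposal must acknowledge up front that the plan below is a sketch of a direction of attack and not a viable route to a complete proof. With that caveat, my plan is to split by entropy. First, the non-lacunary hypothesis supplies $p,q$ in the semigroup with $\log p/\log q$ irrational; any $\mu$ invariant under the semigroup is automatically invariant under both $T_p$ and $T_q$, so it suffices to classify ergodic, joint $(T_p,T_q)$-invariant probability measures on $\Tb$ and then upgrade the ergodicity to the full semigroup.

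If $h_\mu(T_p)>0$, then the Rudolph--Johnson theorem (joint invariance under two multiplicatively independent endomorphisms together with positive entropy forces $\mu$ to be Lebesgue) settles the case immediately. The content is therefore the zero-entropy case. Here I would try a Fourier-coefficient approach. Writing $\hat\mu(n)=\int_\Tb z^{-n}\,d\mu(z)$, the identity $T_m^*\mu=\mu$ is equivalent to $\hat\mu(n)=\hat\mu(mn)$ for every $n\in\Zb$, so $\hat\mu$ is constant along every orbit of the multiplicative monoid $S=\{p^iq^j:i,j\ge 0\}$ acting on $\Zb\setminus\{0\}$. Because $\log p/\log q\notin\Qb$, these orbits equidistribute modulo every integer $N$, which severely constrains the possible profiles of $\hat\mu$.

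The remaining --- and decisive --- step is to combine this orbit rigidity on the dual side with the zero-entropy hypothesis on the physical side to force $\hat\mu(n)\to 0$ along a sufficiently dense sequence, thereby ruling out any non-atomic ergodic $\mu$. This is precisely the step at which the conjecture has resisted attack for over half a century: zero-entropy joint $(T_p,T_q)$-invariant measures need not have uniformly decaying Fourier coefficients, and Gaussian or Riesz-product type candidates show that no purely dualising argument can succeed without additional input. The main obstacle, therefore, is finding that additional input. A realistic attack probably needs to enlarge the setting, for instance to endomorphisms of $\Tb^n$ where the strongly independent matrices introduced in this paper may supply the missing rigidity, and then attempt to transfer the conclusion back to the one-dimensional torus.
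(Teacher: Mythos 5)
You cannot be faulted for failing to prove this statement: in the paper it is \emph{stated as a conjecture}, with no proof given or claimed --- it is Furstenberg's still-open measure rigidity conjecture, and the paper's own contribution (Theorem~\ref{rigidity on torus}) deliberately sidesteps it by imposing extra invariance hypotheses (invariance under $\times(A^j+B_i)$ for strongly independent $B_i$) rather than resolving the zero-entropy case. So there is no proof in the paper to compare yours against, and your proposal, which candidly labels itself a sketch, should be judged on that basis.

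As a sketch, what you write is an accurate summary of the known state of affairs, and the gap you name is the real one: the positive-entropy case follows from Rudolph--Johnson, and everything hinges on zero entropy, where Fourier-side orbit rigidity alone is insufficient. Two smaller points. First, your reduction ``it suffices to classify ergodic joint $(T_p,T_q)$-invariant measures'' is slightly too quick: ergodicity under the full non-lacunary semigroup is weaker than ergodicity under the subaction generated by $T_p$ and $T_q$, so to invoke Rudolph--Johnson in the form stated here you would need an ergodic decomposition step (or cite Johnson's version, which is formulated for non-lacunary semigroups directly). Second, the claim that the $S$-orbits $\{p^iq^jn\}$ ``equidistribute modulo every integer $N$'' is not correct as stated (e.g.\ congruence obstructions coming from common factors with $N$ persist), though some weaker density statement of this flavour is what one would actually use. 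Neither repair turns the sketch into a proof; the decisive zero-entropy step remains open, exactly as you say, and the present paper's strongly independent matrices do not fill it --- they trade the conjecture's hypotheses for much stronger ones in which the Fourier-coefficient argument closes.
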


The first breakthrough of Furstenberg's conjecture was achieved by R. Lyons.

\begin{theorem}\cite[Theorem 1]{L}\

Suppose $p,q\geq 2$ are two relatively prime integers. If  a non-atomic $\times p,\times q$-invariant  Borel probability measure $\mu$ on $\Tb$ is $T_p$-exact, then it is the Lebesgue measure. Here $\mu$ is $T_p$-exact means that $(\Tb, \mathscr{B},\mu, T_p)$ has no nontrivial zero entropy factor.
\end{theorem}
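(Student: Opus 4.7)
The strategy is to show that every non-zero Fourier coefficient
\[
\hat\mu(k) \;=\; \int_\Tb z^k\,d\mu(z)
\]
vanishes, after which $\mu$ must coincide with the Lebesgue (Haar) measure on $\Tb$. Note that the $\times p$- and $\times q$-invariance of $\mu$ immediately give the identity $\hat\mu(k) = \hat\mu(p^aq^b k)$ for all $a,b\geq 0$, so a single non-vanishing coefficient generates an entire orbit of equal non-vanishing values, and it suffices to contradict the persistence of this orbit for some fixed $k\neq 0$.

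Next, I would translate the $T_p$-exactness hypothesis into an $L^2$-statement. The condition that $(\Tb,\mathscr{B},\mu,T_p)$ has no non-trivial zero entropy factor is equivalent, via Rokhlin's description of exact endomorphisms through their natural extension (the K-property), to the triviality of the tail $\sigma$-algebra $\bigcap_{n\geq 0}T_p^{-n}(\mathscr{B})$ modulo $\mu$. Consequently, if $P_n$ denotes the $L^2(\mu)$-orthogonal projection onto the subspace $L^2(T_p^{-n}\mathscr{B},\mu)$, which consists exactly of functions of $z^{p^n}$, then $P_nf \to \int f\,d\mu$ in $L^2(\mu)$ for every $f\in L^2(\mu)$. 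Applied to $f=z^k$ and combined with Pythagoras, this yields
\[
\|P_n z^k\|_{L^2(\mu)}^2 \;\longrightarrow\; |\hat\mu(k)|^2 \qquad \text{as } n\to\infty.
\]

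The heart of the plan is to insert the $\times q$-invariance to force this limit to be $0$. Since $\hat\mu(k)=\hat\mu(q^mk)$ for every $m$, the same convergence holds with $k$ replaced by $q^mk$. Now $P_nz^{q^mk}$ is the best $L^2(\mu)$-approximation of $z^{q^mk}$ by a function of $z^{p^n}$, and for $m$ much larger than $n$ the arithmetic incommensurability $\gcd(p^n,q^m)=1$ should render this approximation small. Concretely, I would unfold the disintegration of $\mu$ along the $p^n$-to-$1$ map $T_p^n$ to express $\|P_nz^{q^mk}\|_2^2$ as a weighted sum of products of nearby Fourier coefficients $\hat\mu(q^mk+\ell p^n)$, and then apply Wiener's lemma --- non-atomicity of $\mu$ forces $(2N+1)^{-1}\sum_{|j|\leq N}|\hat\mu(j)|^2\to 0$ --- to conclude that these sums can be made arbitrarily small as $m\to\infty$ with $n$ held fixed.

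The main obstacle is making this quantitative coupling effective. Exactness provides only qualitative $L^2$-convergence with no rate, so the upper bound on $\|P_nz^{q^mk}\|_2^2$ coming from the disintegration must be explicit in both $n$ and $m$, and must simultaneously incorporate the arithmetic equidistribution of $q^mk\pmod{p^n}$ granted by $\gcd(p,q)=1$ and the Wiener-type average decay granted by non-atomicity. Reconciling the lower bound $\|P_nz^{q^mk}\|_2^2\to |\hat\mu(k)|^2$ (as $n\to\infty$ for each fixed $m$) with an upper bound that tends to $0$ as $m\to\infty$ along an appropriate choice $n=n(m)$ is the decisive step, and is where all four hypotheses --- $\times p$-invariance, $\times q$-invariance, $T_p$-exactness, and non-atomicity --- must be deployed together.
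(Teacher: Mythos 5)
A preliminary remark on the comparison itself: the paper does not prove this statement; it is quoted from [L, Theorem 1] as background, so the only benchmark is Lyons' original argument. Your outline does identify the same general ingredients as [L]: the standard equivalence of ``no nontrivial zero-entropy factor'' with triviality of the tail $\bigcap_{n\ge 0}T_p^{-n}\mathscr{B}$, the reverse-martingale convergence $P_nf\to\int f\,d\mu$, the identity $\hat\mu(q^mk)=\hat\mu(k)$ from $\times q$-invariance, and Wiener's lemma from non-atomicity.

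The gap is at the step you yourself call decisive, and it is not merely an unquantified estimate: the mechanism you propose cannot work. Since the constants lie in $L^2(T_p^{-n}\mathscr{B},\mu)$, conditional expectation preserves integrals, so for \emph{every} $n$ and $m$ one has $\|P_nz^{q^mk}\|_{L^2(\mu)}\ge\bigl|\int_{\Tb}z^{q^mk}\,d\mu\bigr|=|\hat\mu(q^mk)|=|\hat\mu(k)|$; moreover, since the $\sigma$-algebras $T_p^{-n}\mathscr{B}$ decrease, $n\mapsto\|P_nz^{q^mk}\|_2$ is nonincreasing with limit exactly $|\hat\mu(k)|$. Hence, under the reductio assumption $\hat\mu(k)\neq0$, there is no pair $(n,m)$ and no choice $n=n(m)$ at which $\|P_nz^{q^mk}\|_2^2$ is small: the hoped-for upper bound tending to $0$ as $m\to\infty$ contradicts an a priori lower bound valid for all $n,m$, so the tension you set up between the two bounds can never yield a contradiction; the projection norm carries no information beyond $|\hat\mu(k)|$. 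The supporting devices are also not available as stated: the characters $z^j$ are not orthogonal in $L^2(\mu)$, so $\|P_nz^{q^mk}\|_2^2$ does not unfold into a sum of products of the coefficients $\hat\mu(q^mk+\ell p^n)$; and $\gcd(p,q)=1$ gives no equidistribution of $q^mk$ modulo $p^n$ --- it only makes $q$ a unit mod $p^n$, so that $q^mk\equiv k\pmod{p^n}$ along a sparse, geometrically growing set of $m$, to which Wiener's density-averaged lemma does not directly apply. What exactness genuinely yields beyond norms is pointwise Fourier data: writing $P_nz^k=(L_p^nz^k)\circ T_p^n$ with $L_p$ the transfer operator and testing $L_p^nz^k\to\hat\mu(k)\cdot 1$ against $z^j$ gives $\hat\mu(k+jp^n)\to\hat\mu(k)\hat\mu(j)$ as $n\to\infty$ for each fixed $j$. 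A complete proof must couple information of this kind along the progressions $k+p^n\Zb$ with the congruences $q^m\equiv1\pmod{p^n}$ and with non-atomicity; that coupling is the actual content of Lyons' proof, and it is the part missing from your plan.
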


This result was improved by D. J. Rudolph under the assumption that $p$ and $q$ are coprime and an extra positive entropy condition~\cite[Theorem 4.9]{R} and later by A. S. A. Johnson ~\cite[Theorem A]{J} under the assumption that $p,q$ are non-lacunary and the positive entropy condition.

\begin{theorem}[Rudolph-Johnson's Theorem]\

Suppose $p$ and $q$ are non-lacunary positive integers greater than 1. If $\mu$ is an ergodic $\times p,\times q$-invariant  Borel probability measure on $\Tb$ such that $T_p$ or $T_q$ has positive measure entropy with respect to $\mu$, then $\mu$ is the Lebesgue measure.
\end{theorem}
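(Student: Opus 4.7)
The plan is to exploit the commuting action of $T_p$ and $T_q$ together with the positive entropy hypothesis, in the spirit of Rudolph's original proof. The crucial input is that non-lacunarity of $p,q$ is equivalent to irrationality of $\log p / \log q$, which provides enough density in orbits of the joint $\times p,\times q$ action to upgrade local invariance into full Haar invariance. The first step is to pass to the natural invertible extension: since $T_p$ and $T_q$ are commuting surjective endomorphisms of $\Tb$, one can form an inverse limit $\tilde\Tb$ on which they lift to commuting automorphisms $\tilde T_p,\tilde T_q$, while $\mu$ lifts to an invariant probability measure $\tilde\mu$ with the same entropy. This makes available the machinery of conditional measures along the unstable orbit equivalence relation of $\tilde T_p$.

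Assuming without loss of generality that $h_\mu(T_p)>0$, I would next disintegrate $\tilde\mu$ into conditional measures $\tilde\mu^u_x$ along $\tilde T_p$-unstable leaves and show, via a Ledrappier--Young-type formula, that positivity of entropy forces these conditional measures to be non-atomic on a set of positive $\tilde\mu$-measure. Since $\tilde T_q$ commutes with $\tilde T_p$, it permutes unstable leaves, and the family $\{\tilde\mu^u_x\}$ transforms $\tilde T_q$-equivariantly under this action. In particular, the action of $\tilde T_q$ on an unstable leaf is, after identifying the leaf with a $p$-adic-type group, an affine map, so the equivariance translates into a concrete algebraic relation between $\tilde\mu^u_x$ and $\tilde\mu^u_{\tilde T_q x}$.

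The main obstacle, and the heart of the argument, is to promote this discrete equivariance into translation invariance of $\tilde\mu^u_x$ under a dense subgroup of each unstable leaf. The strategy is: iterating $\tilde T_q^n$ on a generic point of a generic unstable leaf yields orbits whose pairwise differences, by the irrationality of $\log p/\log q$, generate a dense subgroup of the leaf. A delicate combination of entropy bounds, an adic coordinate description of the leaves, and the non-atomicity established above then shows that $\tilde\mu^u_x$ is invariant under each such translation; weak-$\ast$ continuity extends this to the closure, forcing $\tilde\mu^u_x$ to coincide with Haar measure on almost every unstable leaf. Pushing forward to $\Tb$ and invoking ergodicity of the joint $\times p,\times q$ action then forces $\mu$ to be the Lebesgue measure, completing the proof.
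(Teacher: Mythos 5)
This theorem is not proved in the paper at all: it is quoted as background from Rudolph \cite{R} and Johnson \cite{J}, so there is no internal proof to compare against, and your proposal must stand on its own as a proof of a genuinely deep result. As it stands it does not. The outline follows the familiar strategy (natural extension, conditional measures along the $\tilde T_p$-expanding direction, positive entropy forcing nontrivial conditionals, equivariance under $\tilde T_q$, density from irrationality of $\log p/\log q$, Haar conditionals, hence Lebesgue), but the step you yourself call the heart of the argument --- upgrading the $\tilde T_q$-equivariance of the leafwise measures to invariance under a dense group of translations --- is exactly the content of Rudolph's main lemma and Johnson's extension, and in your write-up it is asserted (``a delicate combination of entropy bounds\dots then shows'') rather than argued. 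A sketch that black-boxes precisely the hard step is not a proof.

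There is also a concrete mechanical error in how you propose to use non-lacunarity. $\tilde T_q$ does not preserve individual $\tilde T_p$-unstable leaves: it maps the leaf through $x$ to the leaf through $\tilde T_q x$. So ``iterating $\tilde T_q^n$ on a generic point of a generic unstable leaf'' does not produce points lying in one leaf, and their ``pairwise differences'' are not translations of that leaf; the dense subgroup you want cannot be generated this way. In the actual arguments the translations arise from comparing leafwise measures at distinct points of the \emph{same} atom/leaf via recurrence (Poincar\'e or maximal ergodic theorem type arguments), and irrationality of $\log p/\log q$ enters through the density of the group of scales $\{p^a q^{-b}\}$ in $\Rb_{>0}$, which is what makes the resulting group of translations dense. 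Finally, Johnson's case is not a routine repetition of Rudolph's: when $p$ and $q$ are not coprime the inverse-limit (solenoid) structure and the relevant expanding foliation are more delicate, and your sketch does not address this; nor does the closing sentence (``pushing forward to $\Tb$ and invoking ergodicity\dots forces $\mu$ to be the Lebesgue measure'') explain how Haar leafwise measures yield $\hat\mu(k)=0$ for all nonzero $k$. In short: right general roadmap, but the decisive steps are missing and one of the stated mechanisms is incorrect.
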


One may consult~\cite{KK, KS1, KS2} for the extensions of above results to automorphisms on $n$-torus with $n\geq 2$.

Recently, the first named author obtained the following rigidity theorem.

\begin{theorem}\cite[Theorem 1.5]{H}\

Let $p$ be a nonzero integer. The Lebesgue measure is the unique non-atomic $\times p$-invariant Borel probability measure on $\Tb$ satisfying one of the following:
\begin{enumerate}
  \item it is ergodic and there exist a nonzero integer $l$ and a F{\o}lner sequence $\Sigma=\{F_m\}_{m=1}^\infty$ in $\mathbb{N}$
  such that $\mu$  is $\times(p^j+l)$-invariant for all $j$ in some $E\subseteq \mathbb{N}$ with upper density $\overline{D}_{\Sigma}(E)$~(see Definition~\ref{density}) equal to $1$;
  \item  it is weakly mixing and there exist a nonzero integer $l$ and a F{\o}lner sequence $\Sigma=\{F_m\}_{m=1}^\infty$ in $\mathbb{N}$
  such that $\mu$  is $\times(p^j+l)$-invariant for all $j$ in some $E\subseteq \mathbb{N}$ with $\overline{D}_{\Sigma}(E)>0$;
  \item  it is strongly mixing and there exist a nonzero integer $l$ and an infinite set $E\subseteq \mathbb{N}$
  such that $\mu$  is $\times(p^j+l)$-invariant for all $j$ in  $E$.
\end{enumerate}
Moreover, a $\times p$-invariant Borel probability measure satisfying (2) or (3) is either a Dirac measure or the Lebesgue measure.
\end{theorem}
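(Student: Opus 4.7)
The plan is to work entirely with Fourier coefficients: writing $\hat\mu(n) = \int_\Tb z^n\, d\mu(z)$, it suffices to show that $\hat\mu(n) = 0$ for every nonzero integer $n$. The hypothesis that $\mu$ is $\times m$-invariant translates to $\hat\mu(mn) = \hat\mu(n)$ for all $n$, so the $\times p$-invariance gives $\hat\mu(p^j n) = \hat\mu(n)$ for every $j \geq 0$, and the $\times(p^j+l)$-invariance on $E$ gives $\hat\mu(n(p^j+l)) = \hat\mu(n)$ for $j \in E$. Writing $f_k(z) = z^k$, the two-point correlation
\[
C_j := \int_\Tb f_n(T_p^j z)\, f_{ln}(z)\, d\mu(z) = \int_\Tb z^{n(p^j+l)}\, d\mu(z) = \hat\mu(n(p^j+l))
\]
is therefore identically equal to $\hat\mu(n)$ for every $j \in E$, and the goal is to identify its limit along (a subsequence of) $E$ using the dynamical hypothesis.

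In case (3), strong mixing of $(T_p, \mu)$ yields $C_j \to \hat\mu(n)\hat\mu(ln)$ as $j \to \infty$, so restricting to the infinite set $E$ gives the identity $\hat\mu(n) = \hat\mu(n)\hat\mu(ln)$. In case (2), weak mixing provides
\[
\frac{1}{|F_m|}\sum_{j \in F_m} \bigl|C_j - \hat\mu(n)\hat\mu(ln)\bigr| \longrightarrow 0;
\]
restricting the sum to $F_m \cap E$ (on which $C_j = \hat\mu(n)$ is constant) shows that $\frac{|F_m \cap E|}{|F_m|}\cdot\bigl|\hat\mu(n) - \hat\mu(n)\hat\mu(ln)\bigr| \to 0$, and $\overline{D}_\Sigma(E) > 0$ forces the same identity. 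In case (1), the mean ergodic theorem gives $\frac{1}{|F_m|}\sum_{j \in F_m} C_j \to \hat\mu(n)\hat\mu(ln)$; splitting the average into contributions from $F_m \cap E$ (where $C_j = \hat\mu(n)$) and $F_m \setminus E$ (where $|C_j| \leq 1$), and passing to a subsequence of $m$ along which $|F_m \cap E|/|F_m| \to 1$ so that the latter contribution is negligible, one once more obtains $\hat\mu(n) = \hat\mu(n)\hat\mu(ln)$.

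In all three cases, $\hat\mu(n)\bigl(1 - \hat\mu(ln)\bigr) = 0$, so either $\hat\mu(n) = 0$ or $\hat\mu(ln) = 1$. The latter forces $z^{ln} = 1$ for $\mu$-a.e.\ $z$, whence $\mu$ is supported on the finite set of $|ln|$-th roots of unity; since $\mu$ is non-atomic and $ln \neq 0$, this is impossible, so $\hat\mu(n) = 0$ for all $n \neq 0$ and $\mu$ is Lebesgue. For the \emph{moreover} statement in cases (2) and (3), weak mixing of $(T_p, \mu)$ restricted to a finite orbit closure forces $\mu$ to be concentrated at a single $T_p$-fixed point, hence a Dirac measure; otherwise $\mu$ is Lebesgue. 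The main obstacle is case (1): with only ergodicity one has no direct pointwise control of $C_j$, and one must carefully exploit $\overline{D}_\Sigma(E) = 1$ along a subsequence of the Følner sequence in order to pass from the ergodic mean back to the individual constant values of $C_j$ on $E$.
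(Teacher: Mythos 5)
Your proposal is correct and follows essentially the same route as the paper's proof of its more general Theorem~\ref{rigidity on torus} (of which the stated result is the $n=1$, $B_1=l$ case): translate all invariances into Fourier-coefficient identities, identify the limit of the correlations $\hat{\mu}(n(p^j+l))$ along $E$ via the amenable mean ergodic theorem in case (1), the weak-mixing characterization in case (2), and strong mixing in case (3), deduce $\hat{\mu}(ln)=1$, hence finite support, contradicting non-atomicity, and invoke that a weakly mixing measure with an atom is a Dirac measure for the ``moreover'' part. The one step you assert without proof---that weak mixing yields averaged convergence of $|C_j-\hat{\mu}(n)\hat{\mu}(ln)|$ along an \emph{arbitrary} F{\o}lner sequence, not just Ces\`aro averages---is precisely what the paper's Theorem~\ref{fourier coefficient}(2) provides, by applying the same mean ergodic theorem to the product system $(T_p\times T_p,\mu\times\mu)$, so it is a routine completion rather than a gap.
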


In this paper, we introduce so-called strongly independent matrices over a field $\Fb$, and use strongly independent matrices over the rational field $\Qb$ to extend the above measure rigidity results to endomorphisms on  $\mathbb T^n=\{(z_1,\cdots,z_n)\in\Cb^n: |z_1|=\cdots=|z_n|=1\}$.

We say that an $n$-tuple  $(B_1, B_2, \cdots, B_n)$ of $n\times n$ matrices over a field $\Fb$ is  \textbf{strongly independent over $\Fb$} if for any nonzero column vector $v$ in $\Fb^{n\times 1}$, the vectors $B_1v, B_2v, \cdots, B_nv$ are linearly independent over $\Fb$. A nonzero matrix $B$ in $M_n(\Fb)$ is called strongly independent over $\Fb$ if the $n$-tuple $(I_n, B,  \cdots, B^{n-1})$ is strongly independent over $\Fb$.

The next main theorem illustrates the existence of an abundance of strongly independent matrices.

\begin{theorem}\label{strongly independent matrix}
A nonzero matrix $B$ in $M_n(\Fb)$ is strongly independent over $\Fb$ iff the characteristic polynomial of $B$ is irreducible in $\Fb[t]$.
\end{theorem}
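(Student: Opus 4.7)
The plan is to handle each direction of the equivalence by analyzing, for a nonzero vector $v\in\Fb^{n\times 1}$, its \emph{order ideal} $I_v=\{q(t)\in\Fb[t] : q(B)v=0\}$. This is a nonzero ideal in the PID $\Fb[t]$, generated by a unique monic polynomial $m_v(t)$, and the key observation is that $v,Bv,\ldots,B^{n-1}v$ are linearly independent if and only if $\deg m_v = n$, because $m_v$ is the monic polynomial of minimal degree satisfying a relation $m_v(B)v=0$, and $\{v,Bv,\ldots,B^{\deg m_v-1}v\}$ is automatically a basis of the cyclic subspace $\Fb[B]v$.

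For the direction ($\Leftarrow$), assume the characteristic polynomial $p(t)$ is irreducible. By Cayley--Hamilton, $p(B)=0$, so the minimal polynomial $m_B$ of $B$ divides $p$; by irreducibility and since $m_B\neq 1$, we get $m_B=p$, which has degree $n$. For any $v\neq 0$, the polynomial $m_v$ divides $m_B=p$, and since $m_v\neq 1$ (because $v\neq 0$), irreducibility of $p$ forces $m_v=p$. Hence $\deg m_v=n$, and $v,Bv,\ldots,B^{n-1}v$ are linearly independent, proving strong independence.

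For the direction ($\Rightarrow$), I argue the contrapositive: assuming $p$ reducible, I produce a nonzero $v$ with $v,Bv,\ldots,B^{n-1}v$ linearly dependent. Split into two cases according to whether $\deg m_B<n$ or $m_B=p$. In the first case, any nonzero $v$ works, since $m_v\mid m_B$ forces $\deg m_v\le \deg m_B<n$, so $v,Bv,\ldots,B^{n-1}v$ are linearly dependent. In the second case, $p=m_B$ factors as $p=fg$ with $1\le\deg f,\deg g\le n-1$; since $m_B\nmid f$, we have $f(B)\neq 0$, so pick any $v_0$ with $w:=f(B)v_0\neq 0$. Then $g(B)w=p(B)v_0=0$, yielding a nontrivial relation of length $\deg g+1\le n$ among $w,Bw,\ldots,B^{\deg g}w$, so these (and hence $w,Bw,\ldots,B^{n-1}w$) are linearly dependent.

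The only mildly delicate step is the second case of the reverse direction, where one must produce the nonzero $w$ after factoring $p$; but since the minimal polynomial controls when $f(B)$ vanishes, this is immediate. Everything else is a direct application of the correspondence between order ideals and cyclic subspaces, so no serious obstacle is expected.
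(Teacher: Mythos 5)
Your proof is correct, but it follows a genuinely different route from the paper's. The paper first reduces strong independence of $B$ to the statement that $f(B)$ is invertible for every nonzero $f\in\Fb[t]$ of degree at most $n-1$ (Lemma~\ref{L-strong vs linear}); the reducible direction then follows from Cayley--Hamilton exactly as in your Case~2 (one of $f(B),g(B)$ is singular), while the irreducible direction is proved by passing to the algebraic closure $\overline{\Fb}$, putting $B$ in Jordan canonical form, and checking that $f(\lambda_i)\neq 0$ at every eigenvalue because $P_B$ is, up to a scalar, the minimal polynomial of each $\lambda_i$ over $\Fb$. You instead stay entirely inside $\Fb$ and argue through the annihilator ideal $I_v$ and its monic generator $m_v$: irreducibility of $P_B$ forces $m_v=P_B$ for every nonzero $v$ (since $m_v\mid m_B\mid P_B$ and $m_v\neq 1$), hence $\deg m_v=n$ and the cyclic vectors $v,Bv,\dots,B^{n-1}v$ are independent; conversely, a factorization $P_B=fg$ yields an explicit witness $w=f(B)v_0\neq 0$ annihilated by $g(B)$ (your Case~1, $\deg m_B<n$, is the same mechanism). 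Your version buys elementarity and self-containedness: no algebraic closure, no Jordan form, and a concrete witness vector in the reducible case (the paper gets its witness implicitly, as a kernel vector of the singular $f(B)$ via its lemma); the paper's version buys the reformulation in terms of invertibility of $u_1I_n+u_2B+\cdots+u_nB^{n-1}$, which is the form it actually reuses (e.g.\ in Theorem~\ref{non-existence}). All the delicate points in your write-up check out: $I_v\neq 0$ by Cayley--Hamilton, $m_v\mid m_B$, $m_v\neq 1$ for $v\neq 0$, and the equivalence between $\deg m_v=n$ and independence of $v,Bv,\dots,B^{n-1}v$.
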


The above shows existence of strongly independent matrices over certain fields, say, the field of rational numbers $\Qb$.  However over some fields, there are no strongly independent matrices.

\begin{theorem}
\label{non-existence}
If $\Fb$ is an algebraically closed field, then there are no strongly independent $n$-tuples in $M_n(\Fb)$ for $n\geq 2$.
\end{theorem}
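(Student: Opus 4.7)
The plan is to reformulate strong independence as the non-vanishing of a certain determinant polynomial in $n$ variables, and then to use algebraic closure to produce a nontrivial zero.  First I would observe that $(B_1,\ldots,B_n)$ in $M_n(\Fb)$ fails to be strongly independent precisely when some nontrivial linear combination $c_1B_1+\cdots+c_nB_n$ (with $(c_1,\ldots,c_n)\neq 0$ in $\Fb^n$) has a nonzero vector in its kernel.  Equivalently, strong independence says every nonzero $\Fb$-linear combination of the $B_i$ is invertible.

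Accordingly, set
\[
P(c_1,\ldots,c_n)\;:=\;\det\bigl(c_1B_1+\cdots+c_nB_n\bigr)\in\Fb[c_1,\ldots,c_n].
\]
Each entry of $\sum_i c_iB_i$ is linear in the $c_i$, and the determinant is of degree $n$ in the entries, so $P$ is homogeneous of degree $n$.  Strong independence of $(B_1,\ldots,B_n)$ is then equivalent to $P$ having no nontrivial zero in $\Fb^n$.  If $P\equiv 0$ then any nonzero tuple violates this; otherwise I would produce a nontrivial zero via the standard fact that a nonzero homogeneous polynomial of positive degree in $n\geq 2$ variables over an algebraically closed field always has one.

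This last fact I would establish by induction on $n\geq 2$.  For the base case $n=2$, $P(c_1,c_2)$ factors into linear forms over $\Fb$, each of which manifestly has a nontrivial zero.  For the inductive step, write $P=\sum_{k=0}^n c_n^k Q_k(c_1,\ldots,c_{n-1})$ with $Q_k$ homogeneous of degree $n-k$.  If $Q_0=0$ then $c_n\mid P$ and $(1,0,\ldots,0)$ is a nontrivial zero; otherwise $Q_0$ is a nonzero homogeneous polynomial of degree $n$ in $n-1\geq 2$ variables, which by induction admits a nontrivial zero $(a_1,\ldots,a_{n-1})$, whence $P(a_1,\ldots,a_{n-1},0)=Q_0(a_1,\ldots,a_{n-1})=0$ furnishes the nontrivial zero $(a_1,\ldots,a_{n-1},0)$ of $P$.

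No single step is hard; the main conceptual move is the translation of strong independence into a statement about the determinant polynomial $P$, after which algebraic closure of $\Fb$ carries the argument.  Geometrically one is just observing that the projective hypersurface in $\Pb^{n-1}$ cut out by $P$ is nonempty when $n\geq 2$, and this is precisely what fails over non-closed fields such as $\Qb$, which is why Theorem~\ref{strongly independent matrix} is able to produce an abundance of examples there.
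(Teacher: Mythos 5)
Your proposal is correct and is essentially the paper's own argument: the paper forms the homogeneous degree-$n$ determinant polynomial $f(z_1,\dots,z_n)=\det\begin{bmatrix} B_1z & \cdots & B_nz\end{bmatrix}$ in the vector variable and uses algebraic closure with $n\geq 2$ to produce a nonzero zero, while you form the dual polynomial $\det\bigl(c_1B_1+\cdots+c_nB_n\bigr)$ in the coefficient variables via the reformulation of Lemma~\ref{L-strong vs linear}; these are equivalent renderings of the same idea. Your inductive justification that a nonzero homogeneous polynomial of positive degree in at least two variables over an algebraically closed field has a nontrivial zero is a correct filling-in of the step the paper leaves implicit.
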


We shall identify $\Rb^n/\Zb^n$ with the n-torus $\Tb^n$ naturally via
$$\Rb^n/\Zb^n \ni(x_1, x_2, \cdots, x_n)+\Zb^n\mapsto (e^{2\pi i x_1}, e^{2\pi i x_2}, \cdots, e^{2\pi ix_n})\in \Tb^n$$
for $(x_1, x_2, \cdots, x_n)\in \Rb^n$. Let $A$ be a matrix in  $M_n(\Zb)$. The $\times A$ map on $\Tb^n$ is defined by
$T_{A}: \Rb^n/\Zb^n\to \Rb^n/\Zb^n$
\begin{displaymath}
T_{A}((x_1, x_2,\cdots,x_n)+\Zb^n)=(x_1, x_2,\cdots,x_n)A+\Zb^n,
\end{displaymath}
for $(x_1, x_2,\cdots,x_n)$ in $\Rb^n$.

\begin{theorem}\label{rigidity on torus}
Let $A$ be in $M_n(\Zb)$. Suppose that $\mu$ is a $\times A$-invariant Borel probability measure on $\Tb^n$ satisfying one of the following:
\begin{enumerate}
  \item  it is ergodic and there exists an $n$-tuple  $(B_1, B_2, \cdots, B_n)$ of matrices in $M_n(\Zb)$  strongly independent over $\Qb$ and a F{\o}lner sequence
   $\Sigma=\{F_m\}_{m=1}^\infty$ in $\mathbb{N}$ such that $\mu$  is $\times(A^j+B_i)$-invariant for all $j$ in some $E\subseteq \mathbb{N}$ with upper density $\overline{D}_{\Sigma}(E)=1$ and all $i=1, 2, \cdots, n$;
  \item    it is weakly mixing and there exists an $n$-tuple  $(B_1, B_2, \cdots, B_n)$ of matrices in $M_n(\Zb)$  strongly independent over $\Qb$ and a F{\o}lner sequence $\Sigma=\{F_m\}_{m=1}^\infty$ in $\mathbb{N}$ such that $\mu$  is $\times(A^j+B_i)$-invariant for all $j$ in some $E\subseteq \mathbb{N}$ with $\overline{D}_{\Sigma}(E)>0$ and all $i=1, 2, \cdots, n$;
  \item it is strongly mixing and there exists an  $n$-tuple  $(B_1, B_2, \cdots, B_n)$ of matrices in $M_n(\Zb)$  strongly independent over $\Qb$ and an infinite set $E\subseteq \mathbb{N}$
  such that $\mu$  is $\times(A^j+B_i)$-invariant for all $j$ in  $E$ and all $i=1, 2, \cdots, n$.
\end{enumerate}
Then $\mu$ is either finitely supported or the Lebesgue measure.

Moreover, a $\times A$-invariant Borel probability measure satisfying (2) or (3) is either a Dirac measure or the Lebesgue measure.
\end{theorem}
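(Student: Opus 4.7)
The plan is to translate the invariance hypotheses into relations among Fourier coefficients of $\mu$ and then combine them with the ergodic/mixing input and the strong independence of $(B_1,\dots,B_n)$ to force a dichotomy on $\hat\mu$. Writing $\chi_m(x)=e^{2\pi i(m_1x_1+\cdots+m_nx_n)}$ for $m\in\Zb^n$ viewed as a row vector and $\hat\mu(m)=\int_{\Tb^n}\chi_{-m}\,d\mu$, a direct calculation gives $\chi_m\circ T_C=\chi_{mC^T}$ for any $C\in M_n(\Zb)$, so $\times C$-invariance of $\mu$ is equivalent to $\hat\mu(m)=\hat\mu(mC^T)$ for every $m$. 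Applied to $C=A$ and $C=A^j+B_i$ and combined, this produces the master identity
\[
\hat\mu\bigl(m(A^T)^j+mB_i^T\bigr)=\hat\mu(m)\quad\text{for all }m\in\Zb^n,\ j\in E,\ i\in\{1,\dots,n\}.
\]
The crucial manipulation is to rewrite the left-hand side as an $L^2(\mu)$ inner product $\langle\chi_{mB_i^T},\chi_{-m}\circ T_A^j\rangle_\mu$, which opens the door to the spectral hypotheses on $T_A$.

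For case~(1) I would apply the mean ergodic theorem along $\Sigma=\{F_N\}$, which gives $|F_N|^{-1}\sum_{j\in F_N}\chi_{-m}\circ T_A^j\to\hat\mu(m)\cdot 1$ in $L^2(\mu)$. Pairing with $\chi_{mB_i^T}$ converts the averaged master identity into
\[
\lim_{N\to\infty}\frac{1}{|F_N|}\sum_{j\in F_N}\hat\mu\bigl(m(A^T)^j+mB_i^T\bigr)=\hat\mu(mB_i^T)\hat\mu(m).
\]
Since $\overline{D}_\Sigma(E)=1$ and the summand equals the constant $\hat\mu(m)$ for every $j\in F_N\cap E$, the same average also converges to $\hat\mu(m)$, yielding $\hat\mu(m)=\hat\mu(mB_i^T)\hat\mu(m)$ for each $i$. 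Case~(3) uses strong mixing to pass to the limit as $j\to\infty$ inside the infinite set $E$ and reaches the same identity. Case~(2) uses the weak-mixing Ces\`aro statement $|F_N|^{-1}\sum_{j\in F_N}\bigl|\langle\chi_{mB_i^T},\chi_{-m}\circ T_A^j\rangle-\hat\mu(mB_i^T)\hat\mu(m)\bigr|^2\to 0$; restricting the sum to $j\in F_N\cap E$, where the inner product equals $\hat\mu(m)$, and invoking $\overline{D}_\Sigma(E)>0$, forces $|\hat\mu(m)-\hat\mu(mB_i^T)\hat\mu(m)|=0$.

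In every case we arrive at $\hat\mu(m)\bigl(1-\hat\mu(mB_i^T)\bigr)=0$ for every nonzero $m$ and every $i$. If $\hat\mu(m)\ne 0$ for some $m\ne 0$, then $\hat\mu(mB_i^T)=1$ for $i=1,\dots,n$, so $\mu$ is supported in $\bigcap_{i=1}^n\ker\chi_{mB_i^T}$. Strong independence of $(B_1,\dots,B_n)$ over $\Qb$ makes $mB_1^T,\dots,mB_n^T$ linearly independent over $\Qb$ and hence generators of a finite-index sublattice of $\Zb^n$; by Pontryagin duality its annihilator is a finite subgroup of $\Tb^n$, and so $\mu$ is finitely supported. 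Otherwise $\hat\mu$ vanishes off $0$ and $\mu$ is Lebesgue. In cases~(2) and~(3), a finitely supported $T_A$-invariant measure sits on a union of periodic orbits, forcing non-trivial roots of unity into the spectrum of $T_A$ and contradicting weak mixing unless $\mu$ is a single Dirac. The delicate step I anticipate is case~(2): one must convert a Ces\`aro-$L^2$ vanishing into a pointwise identity using only $\overline{D}_\Sigma(E)>0$, and this is where the positive-density hypothesis does the essential work; it is also the reason case~(1) requires $\overline{D}_\Sigma(E)=1$ rather than merely positive upper density, since the contribution of $F_N\setminus E$ to the Ces\`aro average must be forced to be negligible in the limit.
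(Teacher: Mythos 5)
Your proposal is correct and follows essentially the same route as the paper: translate the $\times(A^j+B_i)$-invariance into Fourier-coefficient identities, use the ergodic/weakly mixing/strongly mixing characterizations along the F{\o}lner sequence (splitting the Ces\`aro averages over $F_N\cap E$ and its complement, exactly where $\overline{D}_\Sigma(E)=1$ versus $>0$ enters) to force $\hat{\mu}(mB_i^T)=1$ for all $i$ whenever some $\hat{\mu}(m)\neq 0$, and then invoke strong independence to confine $\supp(\mu)$ to a finite subgroup. Your two small deviations --- deducing finiteness of the support via the finite-index sublattice and Pontryagin duality rather than the paper's explicit matrix computation (Lemma~\ref{atomic}), and deducing the Dirac conclusion from periodic orbits and the spectrum of the Koopman operator rather than citing the atom lemma (Lemma~\ref{dirac}) --- are equivalent and do not change the structure of the argument.
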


Consequently  there exist ``very small'' semigroups acting on $\Tb^n$ such that the Lebesgue measure is the unique non-atomic invariant measure.

\begin{corollary} \label{C-rigidity}
There exists an abelian multiplicative semigroup $S\subseteq M_n(\Zb)$ acting on $\Tb^n$  such that  the Lebesgue measure is the unique non-atomic Borel probability measure on $\Tb^n$ which is both invariant under $\times A$ for all $A$ in $S$ and ergodic under $\times B$ for some $B$ in $S$.
\end{corollary}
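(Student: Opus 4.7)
The plan is to set $S$ equal to the nonzero elements of $\Zb[A]$ for a carefully chosen matrix $A$, and then to reduce the corollary directly to Theorem~\ref{rigidity on torus}(1). By Theorem~\ref{strongly independent matrix}, any matrix in $M_n(\Zb)$ whose characteristic polynomial is irreducible over $\Qb$ is strongly independent over $\Qb$; concretely I would take $A$ to be the companion matrix of $t^n-2\in\Zb[t]$, which is irreducible by Eisenstein's criterion. Setting $S:=\Zb[A]\setminus\{0\}$, the ring $\Zb[A]\cong\Zb[t]/(t^n-2)$ is an integral domain, so $S$ is closed under matrix multiplication, contains $I_n$, and is an abelian submonoid of $M_n(\Zb)$. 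The evaluation $A\mapsto 2^{1/n}$ defines an injective ring homomorphism $\phi:\Zb[A]\hookrightarrow\Rb$, which I will use later to detect nonvanishing.

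Now let $\mu$ be a non-atomic Borel probability measure on $\Tb^n$ that is $\times P$-invariant for every $P\in S$ and $\times B$-ergodic for some $B\in S$. I plan to apply Theorem~\ref{rigidity on torus}(1) with $B$ playing the role of ``$A$'' there and with the strongly independent $n$-tuple taken to be $(B_1,\ldots,B_n):=(I_n,A,A^2,\ldots,A^{n-1})$; by Theorem~\ref{strongly independent matrix} applied to $A$, this $n$-tuple is indeed strongly independent over $\Qb$. Using the F{\o}lner sequence $F_m=\{1,\ldots,m\}$ in $\Nb$, the only remaining task is to produce a set $E\subseteq\Nb$ with $\overline{D}_\Sigma(E)=1$ such that $\mu$ is $\times(B^j+A^{i-1})$-invariant for every $j\in E$ and every $i=1,\ldots,n$.

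Each $B^j+A^{i-1}$ lies in $\Zb[A]$, so whenever it is nonzero it lies in $S$ and the desired invariance is automatic from the standing hypothesis on $\mu$. Hence it suffices to verify that for each fixed $i$ the exceptional set $\{j\in\Nb:B^j+A^{i-1}=0\}$ is finite. Applying $\phi$ reduces this to showing that $\phi(B)^j=-2^{(i-1)/n}$ has only finitely many solutions $j\in\Nb$. The only possible obstruction is $|\phi(B)|=1$; but $|\phi(B)|=1$ with $\phi(B)\in\Zb[2^{1/n}]\subset\Rb$ forces $\phi(B)\in\{1,-1\}$, whence $B\in\{I_n,-I_n\}$ by injectivity of $\phi$. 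In either case $T_B^2=\id$, so any $T_B$-ergodic measure must be supported on a single $T_B$-orbit of size at most two, contradicting the non-atomicity of $\mu$. Therefore $E$ is cofinite in $\Nb$, Theorem~\ref{rigidity on torus}(1) yields that $\mu$ is finitely supported or Lebesgue, and non-atomicity selects the latter.

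The main obstacle, modest as it is, is precisely the density-one bookkeeping for $E$ that I have isolated in the third paragraph; the substantive content of the corollary is entirely packaged in Theorems~\ref{strongly independent matrix} and~\ref{rigidity on torus}, and the only ingredient beyond them is the ring-theoretic fact that $\Zb[t]/(t^n-2)$ is a domain whose unique real embedding lets one dismiss the $\pm I_n$ case.
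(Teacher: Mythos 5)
Your argument is correct, and at bottom it runs on the same engine as the paper's proof: build $S$ from integer polynomials in a single matrix with irreducible characteristic polynomial, so that Theorem~\ref{strongly independent matrix} supplies the strongly independent tuple and Theorem~\ref{rigidity on torus}(1) does the rest. The difference is in the choice of $S$ and in how carefully the quantifier ``ergodic under $\times B$ for \emph{some} $B\in S$'' is handled. The paper takes $S$ to be the multiplicative semigroup generated by $\{B,\,B^j+B^i\}_{0\le i\le n-1,\,j\ge 1}$ for one strongly independent $B$, i.e.\ exactly the matrices needed to feed Theorem~\ref{rigidity on torus}(1) when the ergodic element is $B$ itself; this keeps $S$ as small as possible, in line with the ``very small semigroup'' motivation. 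You instead take the larger $S=\Zb[A]\setminus\{0\}$, and this buys a genuine advantage: for an arbitrary ergodic element $C\in S$, every nonzero $C^j+A^{i-1}$ again lies in $S$, so the invariance hypotheses of Theorem~\ref{rigidity on torus}(1) are automatic, whereas for a general product $C$ of the paper's generators there is no evident reason that $C^j+B^i$ belongs to the paper's $S$ (the paper's one-line proof does not address this). Your real-embedding argument via $A\mapsto 2^{1/n}$, showing the exceptional set $\{j: C^j+A^{i-1}=0\}$ is finite and excluding $C=\pm I_n$ by ergodicity plus non-atomicity, is a sound addition, and the specific choice $t^n-2$ matters (for characteristic polynomial $t^2+1$ some $B^j+B^i$ would vanish). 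One small thing to add, which the paper also omits: check that Lebesgue measure itself qualifies --- every nonzero element of $\Zb[A]$ has nonzero determinant (the norm of its image in $\Qb(2^{1/n})$), so Haar measure is invariant under all of $S$, and $T_A$ is ergodic since no eigenvalue of $A$ is a root of unity.
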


The paper is organized as follows. We lay down some definitions and notations in Section 2. Theorem~\ref{strongly independent matrix} and Theorem~\ref{non-existence} are proved in Section 3. In Section 4, we characterize mixing properties of Borel probability measures on $\Tb^n$ in terms of their Fourier coefficients. Finally we establish Theorem~\ref{rigidity on torus} in Section 5.

\section{Preliminaries}

Denote the set of nonnegative integers by $\mathbb{N}$,  and the cardinality of a set $E$ by $|E|$.

For a ring $R$, denote by $M_n(R)$ the ring of $n\times n$ square matrices with entries in $R$.  Denote by $GL_n(R)$  the group  of invertible elements in $M_n(R)$. For a field $\Fb$, denote by $\overline{\Fb}$ its algebraic closure. For any $A\in M_n(\Fb)$, denote by $P_A(t)$ the characteristic polynomial $\det(t I_n-A)$ of $A$ in $\Fb[t]$.

For a nonempty set $Z$, denote by $Z^n$ the set of row vectors of length $n$ with coordinates in $Z$, and by $Z^{n\times 1}$ the set of column vectors of length $n$ with coordinates in $Z$.

Within this paper, a measure on a compact metrizable space $X$ always means a Borel probability measure. Denote by $C(X)$ the space of complex-valued continuous functions on $X$.

\begin{definition}
A \textbf{F{\o}lner sequence} in $\mathbb{N}$ is a sequence $\Sigma=\{F_m\}_{m=1}^\infty$ of nonempty finite subsets of $\mathbb{N}$ satisfying
\begin{displaymath}
\lim_{m\rightarrow \infty}\frac{|(F_m+m')\Delta F_m|}{|F_m|}=0
\end{displaymath}
for every $m'$ in $\mathbb{N}$. Here $\Delta$ stands for the symmetric difference.
\end{definition}

\begin{definition}
\label{density}
Let $\Sigma=\{F_m\}_{m=1}^\infty$ be a sequence of nonempty finite subsets of $\mathbb{N}$. For a subset $E$ of $\mathbb{N}$,
the \textbf{upper density} $ \overline{D}_{\Sigma}(E)$
is given by
$$\overline{D}_{\Sigma}(E):=\limsup_{m\rightarrow \infty}\frac{|E\cap F_m|}{|F_m|}.$$
\end{definition}

\begin{definition}
For $k=\left[\begin{matrix} k_1 \\ k_2 \\ \vdots \\ k_n  \end{matrix}\right]\in\Zb^{n\times 1}$ and $z=(z_1,z_2,\cdots,z_n)\in \Tb^n$,
use $z^k$ to denote $z_1^{k_1}z_2^{k_2}\cdots z_n^{k_n}$, and the \textbf{Fourier coefficient} $\hat{\mu}(k)$ of a measure $\mu$ on $\Tb^n$ is defined by
$$\hat{\mu}(k)=\int_{\Tb^n} z^k\,d\mu(z).$$ 
\end{definition}

For a measure $\mu$ on a compact metrizable space $X$,  if $\mu(\{x\})>0$ for some $x$ in $X$, then $x$ is called an \textbf{atom} for $\mu$. A measure with no atoms is called \textbf{non-atomic}.

For a continuous map $T: X\rightarrow X$, a measure $\mu$ on $X$ is called $T$-\textbf{invariant}  if $\mu(E)=\mu(T^{-1}E)$ for every Borel subset $E$ of $X$. For $A$ in $M_n(\Zb)$, we call a measure $\mu$ on $\Tb^n$ $\times A$-invariant if $\mu$ is $T_A$-invariant.

A $T$-invariant measure $\mu$ is called \textbf{ergodic} if every Borel subset $E$ with $T^{-1}E=E$ satisfies $\mu(E)=0$ or $1$.  A measure $\mu$ is called \textbf{weakly mixing} if $\mu\times \mu$ is an ergodic $T\times T$-invariant measure on $X\times X$, and it is called \textbf{strongly mixing} if $\lim_{j\rightarrow \infty}\mu(T^{-j}E\cap F)=\mu(E)\mu(F)$ for all Borel subsets $E,F$ of $X$.

\section{Existence and non-existence of Strongly Independent Matrices over certain fields}

In this section, we prove Theorems~\ref{strongly independent matrix} and \ref{non-existence}, which illustrate  that the existence of strongly independent matrices over a field $\Fb$ depends on algebraic properties of $\Fb$.

\begin{definition}\label{independent}
For a field $\Fb$, we call an $n$-tuple $(B_1, B_2, \cdots, B_n)$ of matrices in $M_n(\Fb)$ \textbf{strongly independent over $\Fb$} if for any nonzero $v$ in $\Fb^{n\times 1}$, the vectors $B_1v, B_2v,\cdots, B_nv$ are linearly independent over $\Fb$. We call a nonzero matrix $B$ in $M_n(\Fb)$ \textbf{strongly independent over $\Fb$} if the $n$-tuple $(I_n, B, \cdots, B^{n-1})$ is strongly independent over $\Fb$.
\end{definition}

\begin{lemma} \label{L-strong vs linear}
Let $B_1, \cdots, B_n\in M_n(\Fb)$. The tuple $(B_1, \cdots, B_n)$ is strongly independent over $\Fb$ iff for any nonzero $(u_1, \cdots, u_n)\in \Fb^n$ the matrix $\sum_{j=1}^n u_jB_j$ is invertible.
\end{lemma}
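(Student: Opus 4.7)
The plan is to exploit the fact that the relation $\sum_{j=1}^n u_j B_j v = 0$ can be read in two symmetric ways: as a linear dependence among the vectors $B_1 v, \ldots, B_n v$ with coefficients $u_1, \ldots, u_n$, or as the statement that $v$ lies in the kernel of the matrix $M := \sum_{j=1}^n u_j B_j$. This dual reading makes both implications essentially definition-chasing, but worth stating carefully.

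For the forward direction, I would assume $(B_1, \ldots, B_n)$ is strongly independent and argue the contrapositive: if some nonzero $(u_1, \ldots, u_n) \in \Fb^n$ makes $M = \sum_j u_j B_j$ non-invertible, then there exists a nonzero $v \in \Fb^{n \times 1}$ with $Mv = 0$, i.e. $\sum_j u_j (B_j v) = 0$. Since $(u_1, \ldots, u_n) \neq 0$, this is a nontrivial linear dependence among $B_1 v, \ldots, B_n v$, contradicting strong independence at $v$.

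For the converse, I would assume every nontrivial $\Fb$-linear combination $\sum_j u_j B_j$ is invertible, fix a nonzero $v \in \Fb^{n \times 1}$, and show $B_1 v, \ldots, B_n v$ are linearly independent. Indeed, if $\sum_j u_j B_j v = 0$ for some $(u_1, \ldots, u_n) \in \Fb^n$, then $v \in \ker M$ where $M = \sum_j u_j B_j$. Were $(u_1, \ldots, u_n)$ nonzero, $M$ would be invertible, forcing $v = 0$, a contradiction. Hence $(u_1, \ldots, u_n) = 0$, giving linear independence.

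There is no real obstacle here; the only thing to be careful about is keeping straight which object ($v$ or $(u_j)$) plays the role of ``witness'' to a kernel versus a dependence, since the roles swap between the two directions. The statement is a clean reformulation that will presumably be used downstream to translate the strong independence hypothesis into an invertibility criterion on integer linear combinations.
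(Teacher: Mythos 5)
Your proposal is correct and follows essentially the same definition-chasing argument as the paper, which phrases it as a chain of equivalences (strong independence iff $\sum_j u_j B_j v \neq 0$ for all nonzero $v$ and nonzero $(u_1,\ldots,u_n)$, iff each such combination is invertible); you merely split the chain into the two implications. No gap.
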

\begin{proof} The tuple $(B_1, \cdots, B_n)$ is strongly independent over $\Fb$ iff for any nonzero $v\in \Fb^{n\times 1}$ the vectors $B_1v, \cdots, B_nv$ are linearly independent, iff for any nonzero $v\in \Fb^{n\times 1}$ and any nonzero $(u_1, \cdots, u_n)\in \Fb^n$ the vector $\sum_{j=1}^n u_jB_jv$ is nonzero, iff for any nonzero $(u_1, \cdots, u_n)\in \Fb^n$ the matrix $\sum_{j=1}^n u_jB_j$ is invertible.
\end{proof}

\begin{proof}[Proof of Theorem~\ref{strongly independent matrix}]\

Suppose that $P_B(t)$ is not irreducible in $\Fb[t]$. We have $P_B(t)=f(t)g(t)$ for some $f, g\in \Fb[t]$ with $1\le \deg(f), \deg(g)\le n-1$. Then $0=P_B(B)=f(B)g(B)$ by Hamilton-Cayley Theorem \cite[Theorem XIV.3.1]{Lang}, whence at least one of $f(B)$ and $g(B)$ is not invertible. By Lemma~\ref{L-strong vs linear} we conclude that $B$ is not strongly independent over $\Fb$.

Now assume that $P_B(t)$ is irreducible in $\Fb[t]$. Denote by $D$ the Jordan canonical form of $B$. That is, $D\in M_n(\overline{\Fb})$ and there is some invertible $W\in M_n(\overline{\Fb})$ satisfying $WB=DW$ and
$$ D=\left[\begin{matrix} D_1 & & \\ & \ddots & \\ & & D_k \end{matrix}\right]$$
for some positive integer $k$ such that each $D_i$ is in $M_{m_i}(\overline{\Fb})$ of the form
$$\left[\begin{matrix} \lambda_i &  & \\  1 & \lambda_i &  & \\ &  \ddots & \ddots & & \\ & & 1 & \lambda_i &  \\ & & & 1 & \lambda_i \end{matrix}\right]$$
for some $\lambda_i\in \overline{\Fb}$ and positive integer $m_i$. Then $P_B(t)=\prod_{i=1}^k(t-\lambda_i)^{m_i}$, whence $P_B(\lambda_i)=0$ for every $1\le i\le k$. Since $P_B(t)$ is irreducible in $\Fb[t]$, it follows that for any nonzero $f(t)\in \Fb[t]$ of degree at most $n-1$, one has $f(\lambda_i)\neq 0$ for every $1\le i\le k$.

Let $(u_1, \cdots, u_n)$ be a nonzero vector in $\Fb^n$. Then $f(t)=\sum_{j=1}^nu_jt^{j-1}\in \Fb[t]$ is  nonzero and has degree at most $n-1$. Thus $f(\lambda_i)\neq 0$ for every $1\le i\le k$. It follows that $f(D)$ is invertible, whence $u_1I_n+u_2B+\cdots+u_nB^{n-1}=f(B)=W^{-1}f(D)W$ is invertible.
By Lemma~\ref{L-strong vs linear} we conclude that $B$ is  strongly independent over $\Fb$.
\end{proof}

\begin{remark}\label{remark: existence}
Suppose $f(t)=t^n+a_1t^{n-1}+\cdots+a_{n-1}t+a_n$ is an irreducible polynomial in $\Fb[t]$. Define $B\in M_n(\Fb)$ as
  \begin{displaymath}
  \begin{bmatrix}
0&0&0&\cdots&0&-a_n\\
1&0&0&\cdots&0&-a_{n-1}\\
0&1&0&\cdots&0&-a_{n-2}\\
\vdots&\vdots&\vdots&\cdots&\vdots&\vdots\\
0&0&0&\cdots&0&-a_2\\
0&0&0&\cdots&1&-a_1
\end{bmatrix}.
\end{displaymath}
Then  $P_B(t)=f(t)$~\cite[Definition on page 173 and Lemma 7.17]{Rotman}. By Theorem~\ref{strongly independent matrix}, the matrix $B$ is strongly independent over $\Fb$.

For any $n\geq1$, by Eisenstein's criterion~\cite[Theorem IV.3.1]{Lang}, there exist infinitely many monic  polynomials of degree n in $\Zb[t]$, which are irreducible in $\Qb[t]$ (for example $t^n+p$ for any prime number $p$ in $\Zb$). Theorem~\ref{strongly independent matrix} illustrates that for $n\ge 2$ there are infinitely many $n$-tuples of the form $(I_n, B, \cdots, B^{n-1})$ in $M_n(\Zb)$ strongly independent over $\Qb$.
\end{remark}

Next we prove Theorem~\ref{non-existence} which gives the non-existence of strongly independent matrices over algebraically closed fields.

\begin{proof}[Proof of Theorem~\ref{non-existence}]\

For any matrices $B_1, B_2,\cdots, B_n$ in $M_n(\Fb)$, taking $z=\begin{bmatrix}
                                                          z_1 \\
                                                          z_2 \\
                                                          \vdots \\
                                                          z_n
                                                        \end{bmatrix}$, the polynomial $f(z_1, z_2, \cdots, z_n)=\det\begin{bmatrix} B_1z & B_2z & \cdots & B_nz \end{bmatrix}$ is in $\Fb[z_1, z_2, \cdots, z_n]$. Now $f(z_1, z_2, \cdots, z_n)=0$
always has a nonzero solution $\tilde{z}$ in $\Fb^{n\times 1}$ since $\Fb$ is algebraically closed and $n\ge 2$.
\end{proof}

\section{Fourier Coefficients of Ergodic, Weakly Mixing and Strongly Mixing measures on $\Tb^n$ }

In this section we prove Theorem~\ref{fourier coefficient}, characterizing the mixing properties of measures on $\Tb^n$ under $\times A$ map via their Fourier coefficients.

\begin{theorem}
\label{fourier coefficient}
Let $A\in M_n(\Zb)$ and let $\Sigma=\{F_m\}_{m=1}^\infty$ be  a  F{\o}lner sequence in $\mathbb{N}$. The following are true.
\begin{enumerate}
\item A measure $\mu$ on $\Tb^n$ is an ergodic $\times A$-invariant measure iff
\begin{align} \label{eq 4.3}
\lim_{m\rightarrow \infty}\frac{1}{|F_m|}\sum_{j\in F_m} \hat{\mu}(A^jk+l)=
\hat{\mu}(k)\hat{\mu}(l)
\end{align}
for
all $k,l$ in $\Zb^{n\times 1}$.
\item    A measure $\mu$ on $\Tb^n$ is a weakly mixing $\times A$-invariant measure iff
\begin{align} \label{eq 5.4}
\lim_{m\rightarrow \infty}\frac{1}{|F_m|}\sum_{j\in F_m} |\hat{\mu}(A^jk+l)-\hat{\mu}(k)\hat{\mu}(l)|^2=0
\end{align}
for
all $k,l$ in $\Zb^{n\times 1}$.
\item   A measure $\mu$ on $\Tb^n$ is a strongly mixing $\times A$-invariant measure iff
\begin{align} \label{eq 5.5}
\lim_{j\rightarrow \infty}\hat{\mu}(A^jk+l)=\hat{\mu}(k)\hat{\mu}(l)
\end{align}
for all $k,l$ in $\Zb^{n\times 1}$.
\end{enumerate}
\end{theorem}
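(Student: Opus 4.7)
The plan is to identify each of the three Fourier-coefficient conditions with the restriction to characters of a well-known operator-theoretic characterization of the corresponding mixing property on $L^2(\mu)$, and then pass between the two via Stone--Weierstrass density of the span of characters in $C(\Tb^n)$ (hence in $L^2(\mu)$). The basic identity underpinning everything is $z^k\circ T_A = z^{Ak}$, which follows from the row-vector/column-vector convention and yields $\int_{\Tb^n}(z^k\circ T_A^j)\,z^l\,d\mu = \hat{\mu}(A^jk+l)$. Thus, writing $U_A$ for the Koopman operator $f\mapsto f\circ T_A$, each Cesàro expression in the theorem is the evaluation on characters of a quantity of the form $\langle U_A^j f, g\rangle_\mu - \langle f,1\rangle_\mu\langle 1, g\rangle_\mu$.

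For the forward direction of (1), the mean ergodic theorem for isometries along the Følner sequence $\{F_m\}$ gives $|F_m|^{-1}\sum_{j\in F_m}U_A^j f \to Pf$ in $L^2(\mu)$, where $P$ is the orthogonal projection onto $U_A$-fixed vectors; ergodicity collapses $P$ to $f\mapsto \int f\,d\mu$, and taking inner product with characters yields (\ref{eq 4.3}). The backward direction of (1) is what I anticipate to be the main obstacle, because (\ref{eq 4.3}) does not visibly encode invariance. The crucial step is a Følner re-indexing: using $|(F_m+1)\triangle F_m|/|F_m|\to 0$ together with $|\hat{\mu}|\le 1$,
\[
\lim_m \frac{1}{|F_m|}\sum_{j\in F_m}\hat{\mu}\bigl(A^j(Ak)+l\bigr) \;=\; \lim_m \frac{1}{|F_m|}\sum_{j\in F_m}\hat{\mu}(A^{j+1}k+l) \;=\; \hat{\mu}(k)\hat{\mu}(l),
\]
while (\ref{eq 4.3}) applied to $Ak$ in place of $k$ evaluates the same limit as $\hat{\mu}(Ak)\hat{\mu}(l)$. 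Setting $l=0$ forces $\hat{\mu}(Ak)=\hat{\mu}(k)$ for every $k$, so $\mu$ is $\times A$-invariant by uniqueness of Fourier coefficients. With invariance in hand, (\ref{eq 4.3}) extends by linearity and $L^2$-density to $|F_m|^{-1}\sum\int f\circ T_A^j\cdot g\,d\mu \to (\int f)(\int g)$ for all $f,g\in L^2(\mu)$; testing on a $U_A$-fixed $f$ with $g=\bar f$ yields $\|f\|_2^2=|\int f|^2$, and the equality case of Cauchy--Schwarz forces $f$ to be constant, whence $\mu$ is ergodic.

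Parts (2) and (3) are then built on top of (1). For (2), the forward direction uses the standard equivalence of weak mixing with $|F_m|^{-1}\sum_j|\langle U_A^j f, g\rangle - \langle f,1\rangle\langle 1, g\rangle|^2\to 0$ for all $f,g\in L^2(\mu)$ (which follows from the tensor identification of weak mixing with ergodicity of $\mu\times\mu$ under $T_A\times T_A$, itself handled by part (1) applied on $\Tb^{2n}$); specializing to characters yields (\ref{eq 5.4}). Conversely, (\ref{eq 5.4}) implies (\ref{eq 4.3}) by Cauchy--Schwarz on the Cesàro sum, so part (1) already supplies ergodic $\times A$-invariance, and the squared Cesàro quantities extend from characters to $L^2(\mu)$ via the elementary bound $|a+b|^2\le 2|a|^2+2|b|^2$, giving weak mixing. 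For (3), the forward direction is immediate from the standard strong-mixing characterization $\lim_j\langle U_A^j f, g\rangle = \langle f,1\rangle\langle 1, g\rangle$ specialized to characters; conversely, replacing $j$ by $j+1$ in (\ref{eq 5.5}) and comparing with (\ref{eq 5.5}) applied to $Ak$ gives $\hat{\mu}(Ak)\hat{\mu}(l)=\hat{\mu}(k)\hat{\mu}(l)$, yielding invariance at $l=0$, and a parallel density argument then upgrades the pointwise limit (\ref{eq 5.5}) to its $L^2$ counterpart.
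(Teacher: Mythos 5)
Your proposal is correct and follows essentially the same route as the paper: the F{\o}lner mean ergodic theorem for the necessity directions, the F{\o}lner re-indexing trick plus uniqueness of Fourier coefficients to recover $\times A$-invariance, density of the span of characters in $L^2(\mu)$ to upgrade from characters to general functions, and the product-system (tensor) argument for weak mixing. The only deviations are cosmetic: you conclude ergodicity in (1) via the equality case of Cauchy--Schwarz on $U_A$-fixed vectors rather than on indicator functions of invariant sets, and in (2) you phrase the product argument at the $L^2$ operator level where the paper tests directly on characters of $\Tb^{2n}$.
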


To prove Theorem~\ref{fourier coefficient} we need to make some preparations.

\begin{lemma}
\label{lemma:inv}
Let $A\in M_n(\Zb)$. A measure $\mu$ on $\Tb^n$ is $\times A$-invariant iff $\hat{\mu}(k)=\hat{\mu}(Ak)$ for all $k$ in $\Zb^{n\times 1}$.
\end{lemma}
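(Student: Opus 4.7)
The plan is to translate the $\times A$-invariance into a statement about characters and then use density of trigonometric polynomials in $C(\Tb^n)$. The key identity is that for $z\in\Tb^n$ corresponding to $x+\Zb^n\in\Rb^n/\Zb^n$ and $k\in\Zb^{n\times 1}$, one has $z^k=e^{2\pi i\, xk}$ (row times column), hence
\begin{equation*}
(T_A z)^k = e^{2\pi i\, (xA)k}= e^{2\pi i\, x(Ak)}= z^{Ak}.
\end{equation*}
In other words, precomposing the character $\chi_k(z)=z^k$ with $T_A$ produces the character $\chi_{Ak}$. This is the only place where one must be careful to match the left action of $A$ on row vectors in the definition of $T_A$ with the action on column exponents used to define Fourier coefficients.

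First I would record the displayed identity above by unwinding the definitions of $T_A$ and of $z^k$. Then integrating against $\mu$ gives, for every $k\in\Zb^{n\times 1}$,
\begin{equation*}
\int_{\Tb^n} z^k\,d((T_A)_*\mu)(z) \;=\; \int_{\Tb^n} (T_A z)^k\,d\mu(z)\;=\;\int_{\Tb^n} z^{Ak}\,d\mu(z)\;=\;\hat{\mu}(Ak),
\end{equation*}
while the left-hand side equals $\widehat{(T_A)_*\mu}(k)$. Thus the condition $\hat{\mu}(k)=\hat{\mu}(Ak)$ for all $k\in\Zb^{n\times 1}$ is exactly the condition that $\mu$ and $(T_A)_*\mu$ have identical Fourier coefficients. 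The ``only if'' direction follows immediately: $\times A$-invariance means $(T_A)_*\mu=\mu$, so their Fourier coefficients agree.

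For the ``if'' direction I would invoke uniqueness of a Borel probability measure on $\Tb^n$ given its Fourier coefficients. Concretely: the trigonometric polynomials $\{z^k:k\in\Zb^{n\times 1}\}$ separate points of $\Tb^n$ and form a conjugation-closed, unital subalgebra of $C(\Tb^n)$, hence are uniformly dense by Stone--Weierstrass. Equality of Fourier coefficients therefore gives $\int f\,d\mu=\int f\,d((T_A)_*\mu)$ for all $f\in C(\Tb^n)$, and then the Riesz representation theorem forces $\mu=(T_A)_*\mu$, i.e.\ $\mu$ is $\times A$-invariant.

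There is no real obstacle here; the statement is essentially a bookkeeping exercise combining the definitions with Stone--Weierstrass. The only point to watch is the row/column convention, which is why I would spell out the identity $(T_A z)^k=z^{Ak}$ explicitly before invoking it in the Fourier computation.
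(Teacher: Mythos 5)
Your proof is correct and follows essentially the same route as the paper: reduce $\times A$-invariance to integration of continuous functions, use density of the span of the characters $z^k$ in $C(\Tb^n)$, and observe the key identity $(T_Az)^k=z^{Ak}$ so that $\int_{\Tb^n}(T_Az)^k\,d\mu(z)=\hat{\mu}(Ak)$. The only cosmetic difference is that you phrase the uniqueness step via Stone--Weierstrass and Riesz representation for $(T_A)_*\mu$, while the paper cites the standard invariance criterion from Walters; these are the same argument.
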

\begin{proof}
A measure $\mu$ on $\Tb^n$ is $\times A$-invariant iff $\int_{\Tb^n} f(T_A z)\, d\mu(z)=\int_{\Tb^n} f(z)\,d\mu(z)$ for all $f$ in $C(\Tb^n)$ \cite[Theorem 6.8]{Walters} iff
$\int_{\Tb^n} f(T_A z)\, d\mu(z)=\int_{\Tb^n} f(z)\,d\mu(z)$ for all $f$ in a dense subset of $C(\Tb^n)$ iff $\int_{\Tb^n} f(T_A z)\, d\mu(z)=\int_{\Tb^n} f(z)\,d\mu(z)$ for $f(z)=z^k$ for all $k$ in $\Zb^{n\times 1}$ since the linear span of $z^k$'s is dense in $C(\Tb^n)$. Note that $\int_{\Tb^n} (T_A z)^k\, d\mu(z)=\hat{\mu}(Ak)$ for  all $k$ in $\Zb^{n\times 1}$.
\end{proof}

\begin{lemma}\label{supp}
Let $\mu$ be a  measure on $\Tb^n$. For any $k$ in $\Zb^{n\times 1}$, if $\hat{\mu}(k)=1$ then the support of $\mu$
\begin{displaymath}
{\rm supp}(\mu)\subseteq \{z\in\Tb^n:~z^k=1\}.
\end{displaymath}
\end{lemma}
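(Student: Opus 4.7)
The plan is to observe that the condition $\hat{\mu}(k)=1$ is the equality case of the triangle inequality for integrals, which forces the unimodular function $z\mapsto z^k$ to be identically $1$ on a set of full $\mu$-measure, and then exploit closedness of $\{z\in\Tb^n:z^k=1\}$.

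First I would note that since $|z^k|=1$ for every $z\in\Tb^n$, we have $\rRe(z^k)\le 1$ pointwise, with equality iff $z^k=1$. Consequently $1-\rRe(z^k)$ is a nonnegative continuous function on $\Tb^n$. Using that $\mu$ is a probability measure and taking real parts of the hypothesis $\hat{\mu}(k)=\int z^k\,d\mu(z)=1$, I obtain
\[
\int_{\Tb^n}\bigl(1-\rRe(z^k)\bigr)\,d\mu(z)=0.
\]

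Next I would apply the standard fact that a nonnegative measurable function with vanishing integral equals zero almost everywhere, concluding $\rRe(z^k)=1$ for $\mu$-a.e.\ $z$, and hence $z^k=1$ for $\mu$-a.e.\ $z$. Equivalently, $\mu(\{z\in\Tb^n:z^k=1\})=1$.

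Finally, since $z\mapsto z^k$ is continuous and $\{1\}$ is closed in $\Cb$, the set $\{z\in\Tb^n:z^k=1\}$ is a closed subset of $\Tb^n$ of full $\mu$-measure; because $\supp(\mu)$ is by definition the smallest such closed set, the desired inclusion $\supp(\mu)\subseteq\{z\in\Tb^n:z^k=1\}$ is immediate. I do not anticipate any substantial obstacle: the only conceptual point is recognizing the equality case of the triangle inequality for complex integrals, and the remainder is formal.
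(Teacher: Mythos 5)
Your proof is correct and follows essentially the same route as the paper: both arguments take the real part of $\hat{\mu}(k)=1$ to get $\int_{\Tb^n}(1-\rRe(z^k))\,d\mu=0$ (the paper phrases this as $\int_{\Tb^n}|z^k-1|^2\,d\mu=0$) and then conclude the support inclusion from the vanishing of a nonnegative continuous function $\mu$-almost everywhere. Your closing step, spelling out that $\{z\in\Tb^n:z^k=1\}$ is a closed set of full measure and hence contains $\supp(\mu)$, is just a slightly more explicit rendering of what the paper leaves implicit.
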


\begin{proof}
Since  $\hat{\mu}(k)=1$, by the definition of $\hat{\mu}(k)$, we have
\begin{displaymath}
\int_{\Tb^n} z^k d\mu(z)=1.
\end{displaymath}
Thus $\int_{\Tb^n} {\rm Re}(z^k)d\mu(z)=1$. Therefore,
\begin{displaymath}
\int_{\Tb^n}| z^k-1|^2 d\mu(z)=\int_{\Tb^n}(2-2{\rm Re}(z^k) ) d\mu(z)=0.
\end{displaymath}
Hence, ${\rm supp}(\mu)\subseteq \{z\in\Tb^n:~z^k=1\}$.
\end{proof}

\begin{lemma}\label{atomic}
Let $\mu$ be a measure on $\Tb^n$. Let an $n$-tuple $(B_1, B_2, \cdots, B_n)$ of matrices in $M_n(\Zb)$ be strongly independent over $\Qb$.  If there is  some nonzero $k$ in $\Zb^{n\times 1}$ such that $\hat{\mu}(B_ik)=1$ for every $1\leq i\leq n$, then $\mu$ is finitely supported.
\end{lemma}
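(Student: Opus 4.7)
The plan is to use Lemma~\ref{supp} to locate $\supp(\mu)$ inside an intersection of character kernels, and then to show that strong independence of $(B_1,\ldots,B_n)$ over $\Qb$ forces that intersection to be a finite subgroup of $\Tb^n$.

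First I would apply Lemma~\ref{supp} to each of the integer vectors $B_1k,\ldots,B_nk$ to obtain
\[
\supp(\mu)\subseteq\bigcap_{i=1}^n\{z\in\Tb^n:z^{B_ik}=1\}.
\]
Under the identification $\Tb^n=\Rb^n/\Zb^n$ fixed in the introduction, writing $z=(e^{2\pi ix_1},\ldots,e^{2\pi ix_n})$ gives $z^{B_ik}=e^{2\pi ix(B_ik)}$, so $z^{B_ik}=1$ is equivalent to the row-column product $xB_ik$ lying in $\Zb$.

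Next I would introduce the matrix
\[
M=\bigl[\,B_1k\ \big|\ B_2k\ \big|\ \cdots\ \big|\ B_nk\,\bigr]\in M_n(\Zb),
\]
whose columns are $B_1k,\ldots,B_nk$. Because $k\neq 0$ and $(B_1,\ldots,B_n)$ is strongly independent over $\Qb$, these columns are linearly independent over $\Qb$, so $\det M\neq 0$ and $M$ is invertible in $M_n(\Qb)$. The displayed intersection then corresponds to
\[
\{x+\Zb^n\in\Rb^n/\Zb^n:xM\in\Zb^n\},
\]
which is a subgroup of $\Tb^n$ of cardinality $|\det M|$: indeed, $\{x\in\Rb^n:xM\in\Zb^n\}=\Zb^n M^{-1}$ is a lattice containing $\Zb^n$ (since $M$ has integer entries), and the quotient $\Zb^n M^{-1}/\Zb^n$ has order $|\det M|<\infty$.

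Therefore $\supp(\mu)$ is contained in a finite set, so $\mu$ is finitely supported. The only nontrivial ingredient is the invertibility of $M$, which is exactly what strong independence of the tuple $(B_1,\ldots,B_n)$ at the nonzero vector $k$ is designed to guarantee, so I do not anticipate any serious obstacle beyond carefully translating the multiplicative conditions $z^{B_ik}=1$ into the additive lattice condition $xM\in\Zb^n$.
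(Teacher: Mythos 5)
Your proposal is correct and follows essentially the same route as the paper: apply Lemma~\ref{supp} to each $B_ik$, use strong independence of $(B_1,\ldots,B_n)$ at the nonzero vector $k$ to see that the integer matrix with columns $B_ik$ is invertible over $\Qb$, and conclude that the resulting subgroup $\{x+\Zb^n: xM\in\Zb^n\}$ is finite. The only (immaterial) difference is in how finiteness is counted: you identify the set as the lattice quotient $\Zb^nM^{-1}/\Zb^n$ of order $|\det M|$, while the paper simply bounds the admissible integer vectors $w$ with $x=wM^{-1}$ by the size of the entries of $M$.
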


\begin{proof}
 Let $L=\begin{bmatrix}
 B_1k &
 \cdots &
 B_nk
 \end{bmatrix}\in M_n(\Zb)$.
 Since $B_1k, B_2k, \cdots, B_nk$ are linearly independent over $\Qb$, the matrix $L$ is in $GL_n(\Qb)$. Write $L$ as $(L_{i, j})_{1\le i, j\le n}$ and put $M=\sum_{1\le i, j\le n}|L_{i, j}|$.
 By Lemma \ref{supp}, the support of $\mu$, ${\rm supp}(\mu)$, is a subset of $\bigcap_{i=1}^n\{z\in\Tb^n:~z^{B_ik}=1\}$. That is,
\begin{align*}
{\rm supp}(\mu)&\subseteq \bigcap_{i=1}^n\{x+\Zb^n:~x\in [0, 1)^n, xB_ik\in \Zb\}\\
&=\{x+\Zb^n:~x\in [0, 1)^n, xL\in \Zb^n\}\\
&\subseteq \{wL^{-1}+\Zb^n:~w\in [-M,M]^n\cap\Zb^n\}.
\end{align*}
Note that $\{wL^{-1}:~w\in [-M,M]^n\cap\Zb^n\}$ is finite, so is ${\rm supp}(\mu)$.
\end{proof}

We need the following Lemma~\cite[Lemma 4.2]{H} which is a special case of the mean ergodic theorem for amenable semigroups~\cite[Theorem 1]{B}.

\begin{lemma}\label{ergodicmean}
For a compact metrizable space $X$ and a continuous map $T:X\to X$, if $\nu$ is an ergodic $T$-invariant measure on $X$, then for every F{\o}lner sequence $\{F_m\}_{m=1}^\infty$ in $\mathbb{N}$, one has
\begin{displaymath}
\lim_{m\rightarrow \infty} \frac{1}{|F_m|}\sum_{j\in F_m}f\circ T^j=\int_X f d\nu
\end{displaymath}
for every $f\in L^2(X,\nu)$ (note that the identity holds with respect to $L^2$-norm). Consequently
\begin{equation}
\label{eq 4.1}
\lim_{m\rightarrow \infty} \frac{1}{|F_m|}\sum_{j\in F_m}\int_X f(T^j x)g(x) d\nu(x)=\int_X f d\nu \int_X g d \nu
\end{equation}
for all $f,g$ in $L^2(X,\nu)$.
\end{lemma}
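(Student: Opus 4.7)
The plan is to deduce Lemma~\ref{ergodicmean} from a F{\o}lner version of the mean ergodic theorem applied to the Koopman isometry $U : L^2(X,\nu) \to L^2(X,\nu)$, $Uf := f \circ T$, and then to specialize the limit via ergodicity. That $U$ is a linear isometry is immediate from $T$-invariance of $\nu$, and this gives at once the uniform operator bound $\|A_m\| \le 1$ for the averaging operators $A_m f := \frac{1}{|F_m|} \sum_{j \in F_m} U^j f$.

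The first step is to prove $A_m f \to P f$ in $L^2$-norm, where $P$ is the orthogonal projection onto $V := \ker(U - I)$. I would use the decomposition $L^2(X,\nu) = V \oplus W$ with $W := \overline{\mathrm{Im}(U - I)}$; for a contraction this rests on the identity $\ker(U - I) = \ker(U^* - I)$, which is verified by observing that $Uf = f$ together with $\|U^*f\|_2 \le \|f\|_2$ and $\langle U^*f, f\rangle = \|f\|_2^2$ forces $\|U^*f - f\|_2^2 \le 0$. On $V$ the averages equal $f = Pf$ tautologically. On the dense subspace $\mathrm{Im}(U - I) \subseteq W$, if $f = Uh - h$ then the sum telescopes as
$$A_m f = \frac{1}{|F_m|}\left(\sum_{j \in F_m + 1} U^j h - \sum_{j \in F_m} U^j h\right),$$
whose $L^2$-norm is bounded by $\frac{|(F_m + 1) \Delta F_m|}{|F_m|} \|h\|_2 \to 0$ by the F{\o}lner condition. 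The uniform bound $\|A_m\| \le 1$ then extends the convergence to all of $W$, yielding $A_m f \to 0 = Pf$ on $W$.

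To conclude the first displayed limit, I invoke ergodicity: any $f \in V$ satisfies $f \circ T = f$ a.e., which forces $f$ to be $\nu$-a.e.\ equal to the constant $\int_X f \, d\nu$; hence $P f = \int_X f \, d\nu$. Equation~\eqref{eq 4.1} then follows by Cauchy--Schwarz,
$$\left| \int_X (A_m f)(x) g(x) \, d\nu(x) - \int_X f \, d\nu \int_X g \, d\nu \right| \le \left\| A_m f - \int_X f \, d\nu \right\|_2 \|g\|_2 \to 0,$$
since the right-hand side tends to $0$ by the previous step. The main technical obstacle is the F{\o}lner mean ergodic convergence for an isometry rather than a unitary, since $T$ need not be invertible; this is exactly where one needs both the contraction decomposition $L^2 = \ker(U-I) \oplus \overline{\mathrm{Im}(U-I)}$ and the isometric property (to keep $\|U^j h\|_2 = \|h\|_2$ in the telescoping estimate). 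Once these two ingredients are in place, the rest of the argument is routine.
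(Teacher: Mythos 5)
Your proof is correct. Note, however, that the paper does not prove this lemma at all: it simply quotes it as \cite[Lemma 4.2]{H}, which in turn is a special case of Bewley's mean ergodic theorem for amenable semigroup actions \cite[Theorem 1]{B}. So your route is genuinely different in that it is self-contained: you redo the von Neumann-type argument for the Koopman isometry $U f = f\circ T$ along a F{\o}lner sequence, using the splitting $L^2(X,\nu)=\ker(U-I)\oplus\overline{\mathrm{Im}(U-I)}$, which you correctly justify for a (non-unitary) contraction via $\ker(U-I)=\ker(U^*-I)$; the telescoping estimate on $\mathrm{Im}(U-I)$ uses exactly the $m'=1$ instance of the paper's F{\o}lner condition, the isometry property keeps $\|U^jh\|_2=\|h\|_2$, and the uniform bound $\|A_m\|\le 1$ extends convergence to the closure. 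The only implicit ingredient worth flagging is the standard equivalence between the paper's definition of ergodicity (strictly invariant Borel sets) and the statement that $\nu$-a.e.\ $T$-invariant $L^2$ functions are a.e.\ constant, which is what identifies $Pf$ with $\int_X f\,d\nu$; this is classical (one replaces an a.e.\ invariant set by a strictly invariant one differing by a null set), so it is not a gap. In exchange for being longer than a citation, your argument needs nothing beyond Hilbert space basics and works verbatim for non-invertible $T$, whereas the paper's citation buys brevity and the greater generality of amenable semigroups, which is not needed here.
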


\begin{proof}[Proof of Theorem~\ref{fourier coefficient}]\

For any Borel subset $E$ of $\Tb^n$, write $1_E$ for the characteristic function of $E$.

(1) Suppose $\mu$ is an ergodic $\times A$-invariant measure on $\Tb^n$.
  Applying Lemma~\ref{ergodicmean} for $X=\Tb^n, T=T_A$ and $\nu=\mu$, we have
\begin{eqnarray}
\label{eq 4.2}
\lim_{m\rightarrow \infty} \frac{1}{|F_m|}\sum_{j\in F_m}\int_{\Tb^n} f(T_A^j z)g(z) d\mu(z)=\int_{\Tb^n} f d\mu \int_{\Tb^n} g d \mu
\end{eqnarray}
for all continuous functions $f,g$ on ${\Tb^n}$. Letting $f(z)=z^k$ and $g(z)=z^l$ for $z$ in $\Tb^n$ and $k,l$ in $\Zb^{n\times 1}$, we obtain \eqref{eq 4.3},
which is the necessity.

Now assume that \eqref{eq 4.3} holds for
all $k,l$ in $\Zb^{n\times 1}$.

Let $k\in \Zb^{n\times 1}$.
Letting $l=0$ in \eqref{eq 4.3}, we get $\lim_{m\rightarrow \infty}\frac{1}{|F_m|}\sum_{j\in F_m} \hat{\mu}(A^jk)=\hat{\mu}(k)$.
Replacing $k$ by $Ak$, we also have
\begin{eqnarray*}
\hat{\mu}(Ak)=\lim_{m\rightarrow \infty}\frac{1}{|F_m|}\sum_{j\in F_m} \hat{\mu}(A^{j+1}k)
=\lim_{m\rightarrow \infty}\frac{1}{|F_m|}\sum_{j\in F_m+1} \hat{\mu}(A^jk).
\end{eqnarray*}
Then
\begin{align*}
|\hat{\mu}(Ak)-\hat{\mu}(k)|&=\lim_{m\rightarrow \infty}\frac{1}{|F_m|}\bigg|\sum_{j\in F_m+1}\hat{\mu}(A^jk)-\sum_{j\in F_m}\hat{\mu}(A^jk)\bigg|\\
&\le \lim_{m\rightarrow \infty}\frac{|(F_m+1)\Delta F_m|}{|F_m|}=0,
\end{align*}
whence $\hat{\mu}(Ak)=\hat{\mu}(k)$.
By Lemma~\ref{lemma:inv}, we get that $\mu$ is $\times A$-invariant.

From \eqref{eq 4.3} we see that \eqref{eq 4.2} is true for all  $f(z)=z^{k}$ and $g(z)=z^{l}$ with $k,l$ in $\Zb^{n\times 1}$. By linearity, \eqref{eq 4.2} is also true for all $f, g$ in the linear span $V$ of $z^k$ for all $k\in \Zb^{n\times 1}$. Since $V$ is dense in $L^2(\Tb^n,\mu)$, \eqref{eq 4.2} is true for all $f,g\in L^2(\Tb^n,\mu)$. For any Borel subset $E$ of $\Tb^n$ satisfying $T^{-1}_A E=E$, taking $f=g=1_E$ in \eqref{eq 4.2}, we get $\mu(E)=\mu(E)^2$. Hence $\mu$ is ergodic.

(2) Suppose $\mu$ is a weakly mixing $\times A$-invariant measure on $\Tb^n$, which means $\mu\times\mu$ is an ergodic $T_A\times T_A$-invariant measure on $\Tb^{2n}$. Let $k, l\in \Zb^{n\times 1}$. Taking $f(z', z'')=(z')^{k}(z'')^{-k}$ and $g(z', z'')=(z')^{l}(z'')^{-l}$ in \eqref{eq 4.1} of Lemma~\ref{ergodicmean} with $X=\Tb^n\times \Tb^n, T=T_A\times T_A$ and $\nu=\mu\times\mu$, we get
\begin{eqnarray} \label{eq 5.3}
\lim_{m\rightarrow\infty}\frac{1}{|F_m|}\sum_{j\in F_m}|\hat{\mu}(A^jk+l)|^2=|\hat{\mu}(k)|^2|\hat{\mu}(l)|^2.
\end{eqnarray}
Taking $f(z', z'')=(z')^{k}$ and $g(z', z'')=(z')^{l}$ in \eqref{eq 4.1} of Lemma~\ref{ergodicmean} with $X=\Tb^n\times \Tb^n, T=T_A\times T_A$ and $\nu=\mu\times\mu$, we also get
\begin{eqnarray} \label{eq 5.20}
\lim_{m\rightarrow\infty}\frac{1}{|F_m|}\sum_{j\in F_m}\hat{\mu}(A^jk+l)=\hat{\mu}(k)\hat{\mu}(l).
\end{eqnarray}
Since
\begin{eqnarray*}
&&|\hat{\mu}(A^jk+l)-\hat{\mu}(k)\hat{\mu}(l)|^2\\
&=& |\hat{\mu}(A^jk+l)|^2+|\hat{\mu}(k)|^2|\hat{\mu}(l)|^2-\hat{\mu}
      (A^jk+l)\overline{\hat{\mu}(k)}\overline{\hat{\mu}(l)}- \overline{\hat{\mu}(A^jk+l)}\hat{\mu}(k)\hat{\mu}(l),
\end{eqnarray*}
we have
\begin{align*}
&\lim_{m\rightarrow\infty}\frac{1}{|F_m|}\sum_{j\in F_m}|\hat{\mu}(A^jk+l)-\hat{\mu}(k)\hat{\mu}(l)|^2 \\
&=\lim_{m\rightarrow\infty}\frac{1}{|F_m|}\sum_{j\in F_m}[|\hat{\mu}(A^jk+l)|^2+|\hat{\mu}(k)|^2|\hat{\mu}(l)|^2  \\
& \quad \quad \quad \quad \quad \quad -\hat{\mu}(A^jk+l)\overline{\hat{\mu}(k)}\overline{\hat{\mu}(l)}-\overline{\hat{\mu}(A^jk+l)}\hat{\mu}(k)\hat{\mu}(l)]\\
&\overset{\eqref{eq 5.3}, \eqref{eq 5.20}}=|\hat{\mu}(k)|^2|\hat{\mu}(l)|^2+|\hat{\mu}(k)|^2|\hat{\mu}(l)|^2-|\hat{\mu}(k)|^2|\hat{\mu}(l)|^2-
|\hat{\mu}(k)|^2|\hat{\mu}(l)|^2=0.
\end{align*}
This proves the necessity.

Conversely, suppose that \eqref{eq 5.4} holds
for all $k,l\in\Zb^{n\times 1}$.

Note that $T_A\times T_A=T_{\diag(A, A)}$ on $\Tb^n\times \Tb^n=\Tb^{2n}$.
In order to prove that $\mu\times \mu$ is an ergodic $T_A\times T_A$-invariant measure on $\Tb^n\times \Tb^n$, by part (1) it suffices to show that
\begin{align*}
\lim_{m\rightarrow \infty}\frac{1}{|F_m|}\sum_{j\in F_m}\widehat{\mu\times \mu}(\begin{bmatrix}
                                                          A & \\
                                                          & A
                                                        \end{bmatrix}^j\begin{bmatrix}
                                                          k' \\
                                                          k''
                                                        \end{bmatrix}+\begin{bmatrix}
                                                          l' \\
                                                          l''
                                                        \end{bmatrix})=\widehat{\mu\times \mu}(\begin{bmatrix}
                                                          k' \\
                                                          k''
                                                        \end{bmatrix})\widehat{\mu\times \mu}(\begin{bmatrix}
                                                          l' \\
                                                          l''
                                                        \end{bmatrix})
\end{align*}
for all $k', k'', l', l''\in \Zb^{n\times 1}$.
Note that
$$\widehat{\mu\times \mu}(\begin{bmatrix}
                                                          u \\
                                                          v
                                                        \end{bmatrix})=\hat{\mu}(u)\hat{\mu}(v)
                                                        $$
for all $u, v\in \Zb^{n\times 1}$. Thus it suffices to show
\begin{eqnarray*}
\lim_{m\rightarrow \infty}\frac{1}{|F_m|}\sum_{j\in F_m}\hat{\mu}(A^jk'+l')\hat{\mu}(A^jk''+l'')=
\hat{\mu}(k')\hat{\mu}(k'')\hat{\mu}(l')\hat{\mu}(l'')
\end{eqnarray*}
for all $k',k'',l',l''\in \Zb^{n\times 1}$.

Note that
\begin{eqnarray*}
&&|\hat{\mu}(A^jk'+l')\hat{\mu}(A^jk''+l'')-
\hat{\mu}(k')\hat{\mu}(k'')\hat{\mu}(l')\hat{\mu}(l'')|\\
&\leq&|\hat{\mu}(A^jk'+l')[\hat{\mu}(A^jk''+l'')-
\hat{\mu}(k'')\hat{\mu}(l'')]|
+|[\hat{\mu}(A^jk'+l')-\hat{\mu}(k')\hat{\mu}(l')]
 \hat{\mu}(k'')\hat{\mu}(l'')|\\
&\leq&|\hat{\mu}(A^jk''+l'')-\hat{\mu}(k'')(l'')|
 +|\hat{\mu}(A^jk'+l')-\hat{\mu}(k')\hat{\mu}(l')|
\end{eqnarray*}
for all $k',k'',l',l''\in \Zb^{n\times 1}$,
whence
\begin{eqnarray*}
&&\lim_{m\rightarrow \infty}\frac{1}{|F_m|}\sum_{j\in F_m} |\hat{\mu}(A^jk'+l')\hat{\mu}(A^jk''+l'')-
\hat{\mu}(k')\hat{\mu}(k'')\hat{\mu}(l')\hat{\mu}(l'')|^2\\
&\leq& \lim_{m\rightarrow \infty}\frac{1}{|F_m|}\sum_{j\in F_m}[|\hat{\mu}(A^jk''+l'')-\hat{\mu}(k'')\hat{\mu}(l'')|
+|\hat{\mu}(A^jk'+l')-\hat{\mu}(k')\hat{\mu}(l')|]^2\\
&\leq& 2\lim_{m\rightarrow \infty}\frac{1}{|F_m|}\sum_{j\in F_m}[|\hat{\mu}(A^jk''+l'')-\hat{\mu}(k'')\hat{\mu}(l'')|^2
+|\hat{\mu}(A^jk'+l')-\hat{\mu}(k')\hat{\mu}(l')|^2]\\
&\overset{\eqref{eq 5.4}}=&0,
\end{eqnarray*}
where in the second inequality we use  $(a+b)^2\leq 2(a^2+b^2)$ for all real numbers $a, b$.
This proves the sufficiency.

(3) Suppose $\mu$ is strongly mixing, which means that $\lim_{j\rightarrow\infty}\mu(T_A^{-j}E\cap F)=\mu(E)\mu(F)$ for all Borel subsets $E,F$  of $\Tb^n$.
Then
\begin{eqnarray*}
\lim_{j\rightarrow \infty} \int_{\Tb^n} 1_E(T_A^jz)1_F(z)d\mu(z)=\int_{\Tb^n}1_Ed\mu \int_{\Tb^n}1_Fd\mu
\end{eqnarray*}
for all Borel subsets $E,F$ of $\Tb^n$.
Since the linear combinations of characteristic functions are dense in $L^2(\Tb^n,\mu)$, we have
\begin{eqnarray*}
\lim_{j\rightarrow \infty} \int_{\Tb^n} f(T_A^jz)g(z)d\mu(z)=\int_{\Tb^n}fd\mu\int_{\Tb^n}g\,d\mu
\end{eqnarray*}
for all $f,g\in C(\Tb^n)$. In particular, taking $f(z)=z^k$ and $g(z)=z^l$, we obtain \eqref{eq 5.5}
for all $k,l$ in $\Zb^{n\times 1}$. This proves the necessity.

On the other hand, suppose a measure $\mu$ on $\Tb^n$ satisfies \eqref{eq 5.5} for all $k, l\in \Zb^{n\times 1}$. Let $l=0$ and replace $k$ by $Ak$. Then
\begin{displaymath}
\hat{\mu}(Ak)=\lim_{j\rightarrow \infty}\hat{\mu}(A^{j+1}k)=\lim_{j\rightarrow \infty}\hat{\mu}(A^jk)=\hat{\mu}(k)
\end{displaymath}
for all $k\in\Zb^{n\times 1}$. Hence $\mu$ is $\times A$-invariant in view of Lemma~\ref{lemma:inv}. From \eqref{eq 5.5} we have
\begin{eqnarray*}
\lim_{j\rightarrow \infty} \int_{\Tb^n} f(T_A^jz)g(z)d\mu(z)=\int_{\Tb^n}fd\mu \int_{\Tb^n}gd\mu
\end{eqnarray*}
when $f(z)=z^k$ and $g(z)=z^l$ for  $k, l$ in $\Zb^{n\times 1}$. Since the linear combinations of $z^k$ for $k\in \Zb^{n\times 1}$  are dense in $L^2(\Tb^n,\mu)$, the above is also true for all  $f, g\in L^2(\Tb^n,\mu)$. In particular it holds for $f=1_E$ and $g=1_F$ for any Borel subsets $E, F$ of $\Tb^n$, that is,
\begin{displaymath}
\lim_{j\rightarrow\infty}\mu(T_A^{-j}E\cap F)=\mu(E)\mu(F).
\end{displaymath}
\end{proof}

\section{Measure Rigidity on $\Tb^n$}

In this section we prove  Theorem~\ref{rigidity on torus} and Corollary~\ref{C-rigidity}. For this we need the following Lemma~\cite[Lemma 5.1]{H}.

\begin{lemma}\label{dirac}
Let $T:X\rightarrow X$ be a continuous map on a compact metrizable space $X$. Then a weakly mixing $T$-invariant measure $\mu$ on $X$ with an atom is always a Dirac measure, i.e. $\supp(\mu)$ is a singleton.
\end{lemma}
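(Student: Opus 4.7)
The plan is to exploit two standard consequences of weak mixing---namely ergodicity, and the absence of non-constant $L^2$-eigenfunctions of the Koopman operator $U_T f := f\circ T$---in order to reduce the support of $\mu$ to a single point. Since $\mu$ has an atom, $c^\ast := \sup_{x\in X}\mu(\{x\})$ is positive, and the set $A := \{x\in X : \mu(\{x\}) = c^\ast\}$ is nonempty and finite (its cardinality is bounded by $1/c^\ast$).

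First I would show that $T$ restricts to a bijection of $A$. For any $y\in A$, $T$-invariance gives $\mu(\{Ty\}) = \mu(T^{-1}\{Ty\}) \ge \mu(\{y\}) = c^\ast$, so $Ty\in A$ by maximality of $c^\ast$. Injectivity of $T|_A$ follows from the same identity: if $Ty_1 = Ty_2 = z$ for distinct $y_1,y_2\in A$, then $\mu(T^{-1}\{z\}) \ge 2c^\ast > c^\ast = \mu(\{z\})$, contradicting $T$-invariance. Hence $A$ decomposes as a disjoint union of $T$-cycles. For any such cycle $O\subseteq A$, the relation $T(O)=O$ gives $O\subseteq T^{-1}(O)$, and $\mu(T^{-1}(O)\setminus O) = \mu(T^{-1}(O)) - \mu(O) = 0$ by $T$-invariance; hence $T^{-1}(O) = O$ modulo $\mu$-null sets. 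Since weak mixing implies ergodicity and $\mu(O) = c^\ast\cdot|O| > 0$, ergodicity forces $\mu(O) = 1$. Thus $\mu$ is the uniform measure on a single finite periodic orbit of period $p := |O|$.

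Finally I would invoke the spectral characterization of weak mixing: on $L^2(X,\mu)\cong \Cb^p$, the Koopman operator $U_T$ is the cyclic permutation of the $p$ points of $O$, whose eigenvalues are precisely the $p$-th roots of unity. Weak mixing forces $U_T$ to have no $L^2$-eigenfunctions other than constants, so $p = 1$. Hence $\supp(\mu)$ is a single fixed point and $\mu$ is a Dirac measure.

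The main subtlety is that $T$ need not be injective on $X$: preimages of atoms in $A$ might lie outside $A$, so one has to verify that these contribute no mass. The identity $\mu(T^{-1}\{y\}) = \mu(\{y\})$ combined with the maximality of $c^\ast$ handles this bookkeeping cleanly; beyond continuity of $T$ and weak mixing of $\mu$, no finer structure of $(X,T)$ is needed.
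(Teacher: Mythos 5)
Your argument is correct and complete. Note that the paper itself does not prove this lemma: it is quoted from \cite[Lemma 5.1]{H}, so there is no in-paper proof to compare against; your proof is a self-contained and standard route (maximal-mass atoms form a finite set permuted by $T$, ergodicity concentrates $\mu$ uniformly on one periodic orbit, and the spectral characterization of weak mixing rules out period greater than one). Two small points are worth making explicit if you write this up. First, with the paper's definition of ergodicity (sets $E$ with $T^{-1}E=E$ exactly), your cycle $O$ is only almost invariant ($O\subseteq T^{-1}O$ with $\mu(T^{-1}O\setminus O)=0$); either invoke the standard fact that almost invariant sets agree with strictly invariant ones up to null sets, or patch directly by replacing $O$ with $O'=\bigcup_{n\ge 0}T^{-n}O$, which satisfies $T^{-1}O'=O'$ and $\mu(O')=\mu(O)>0$, so $\mu(O')=1$ and hence $\mu(O)=1$. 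Second, your final step uses that a weakly mixing system has no non-constant $L^2$-eigenfunctions, which holds for non-invertible maps as well (if $f\circ T=\lambda f$ with $f$ non-constant, then $F(x,y)=f(x)\overline{f(y)}$ is a non-constant $T\times T$-invariant function, contradicting ergodicity of $\mu\times\mu$); in your finite setting one can even avoid this general theorem by observing that for $p\ge 2$ the union of preimages of the diagonal orbit $\{(T^kx_0,T^kx_0):0\le k<p\}$ is a strictly $(T\times T)$-invariant set of $\mu\times\mu$-measure $1/p\in(0,1)$, again contradicting ergodicity of $\mu\times\mu$.
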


Note that a measure $\mu$ on $\Tb^n$ is the Lebesgue measure iff $\hat{\mu}(k)=0$ for all nonzero $k\in \Zb^{n\times 1}$.

\begin{proof}[Proof of Theorem~\ref{rigidity on torus}]\

(1) Suppose $\mu$ is an ergodic $\times A$-invariant measure on $\Tb^n$ and there exist an $n$-tuple $(B_1, B_2, \cdots, B_n)$ of matrices in $M_n(\Zb)$ which is strongly independent over $\Qb$ and a F{\o}lner sequence $\Sigma=\{F_m\}_{m=1}^\infty$ in $\Nb$  such that $\mu$ is $\times(A^j+B_i)$-invariant for every $1\leq i\leq n$ and $j$ in some $E\subseteq \mathbb{N}$ with $\overline{D}_{\Sigma}(E)=1$. Passing to a subsequence of $\Sigma$ if necessary, we may assume that $\lim_{m\rightarrow \infty}\frac{|F_m\cap E|}{|F_m|}=1$. By Lemma~\ref{lemma:inv}, we have $\hat{\mu}(A^jk+B_ik)=\hat{\mu}(k)$ for all $j\in E$, $1\le i\le n$ and $k\in \Zb^{n\times 1}$.

Assume that  $\mu$ is not the Lebesgue measure. Then there exists a nonzero $k\in \Zb^{n\times 1}$ such that $\hat{\mu}(k)\neq 0$.

Since $\mu$ is an ergodic $\times A$-invariant measure, by Theorem~\ref{fourier coefficient} (1), we have
$\lim_{m\rightarrow \infty}\frac{1}{|F_m|}\sum_{j\in F_m} \hat{\mu}(A^jk+B_ik)=\hat{\mu}(k)\hat{\mu}(B_ik)$
for every $1\leq i\leq n$. Note that
\begin{align*}
&\frac{1}{|F_m|}\sum_{j\in F_m} \hat{\mu}(A^jk+B_ik)\\
&=\frac{1}{|F_m|}\sum_{j\in F_m\cap E}\hat{\mu}(A^jk+B_ik)+\frac{1}{|F_m|}\sum_{j\in F_m\setminus E} \hat{\mu}(A^jk+B_ik)\\
&=\frac{|F_m\cap E|}{|F_m|}\hat{\mu}(k)+\frac{1}{|F_m|}\sum_{j\in F_m\setminus E} \hat{\mu}(A^jk+B_ik)\rightarrow\hat{\mu}(k)
\end{align*}
 as $m\rightarrow \infty$.  Hence $\hat{\mu}(k)=\hat{\mu}(k)\hat{\mu}(B_ik)$ which implies $\hat{\mu}(B_ik)=1$ for every $1\leq i\leq n$. From Lemma~\ref{atomic} we get that $\mu$ is finitely supported.

(2) Suppose $\mu$ is a weakly mixing $\times A$-invariant measure on $\Tb^n$ and
there exist an $n$-tuple $(B_1, B_2, \cdots, B_n)$ of matrices in $M_n(\Zb)$ which is strongly independent over $\Qb$
and a F{\o}lner sequence $\Sigma=\{F_m\}_{m=1}^\infty$ such that $\mu$ is $\times(A^j+B_i)$-invariant for every $1\leq i\leq n$ and $j$ in some $E\subseteq \mathbb{N}$ with $\overline{D}_{\Sigma}(E)>0$. By Lemma~\ref{lemma:inv}, we have $\hat{\mu}(A^jk+B_ik)=\hat{\mu}(k)$ for all $j\in E$, $1\le i\le n$ and $k\in \Zb^{n\times 1}$.

Assume that  $\mu$ is not the Lebesgue measure.  Then there exists a nonzero $k\in \Zb^{n\times 1}$ such that $\hat{\mu}(k)\neq 0$.

Let $1\le i\le n$. Since $\mu$ is a weakly mixing $\times A$-invariant measure, by Theorem~\ref{fourier coefficient} (2), we have
$\lim_{m\rightarrow \infty}\frac{1}{|F_m|}\sum_{j\in F_m}|\hat{\mu}(A^jk+B_ik)-\hat{\mu}(k)\hat{\mu}(B_ik)|^2=0$.
Therefore,
 \begin{eqnarray*}
 0&=&  \limsup_{m\rightarrow \infty}\frac{1}{|F_m|}\sum_{j\in F_m} |\hat{\mu}(A^jk+B_ik)-\hat{\mu}(k)\hat{\mu}(B_ik)|^2\\
 &\geq& \limsup_{m\rightarrow \infty}\frac{1}{|F_m|}\sum_{j\in F_m\cap E} |\hat{\mu}(A^jk+B_ik)-\hat{\mu}(k)\hat{\mu}(B_ik)|^2\\
 &=& \limsup_{m\rightarrow \infty}\frac{1}{|F_m|}\sum_{j\in F_m\cap E} |\hat{\mu}(k)-\hat{\mu}(k)\hat{\mu}(B_ik)|^2\\
 &=& |\hat{\mu}(k)-\hat{\mu}(k)\hat{\mu}(B_ik)|^2 \overline{D}_{\Sigma}(E).
\end{eqnarray*}
Hence $\hat{\mu}(k)-\hat{\mu}(k)\hat{\mu}(B_ik)=0$, which implies that $\hat{\mu}(B_ik)=1$. From Lemma~\ref{atomic} we get that $\mu$ is finitely supported.

(3) Suppose $\mu$ is a strongly mixing $\times A$-invariant measure on $\Tb^n$ and
there exist an $n$-tuple $(B_1, B_2, \cdots, B_n)$ of matrices in $M_n(\Zb)$ which is strongly independent over $\Qb$
and an infinite set $E\subseteq\mathbb{N}$ such that $\mu$ is $\times(A^j+B_i)$-invariant for every $1\leq i\leq n$ and $j$ in $E$.

Assume that $\mu$ is not the Lebesgue measure. Then there exists a nonzero $k\in \Zb^{n\times 1}$ such that $\hat{\mu}(k)\neq 0$.

Let $1\le i\le n$. Since $\mu$ is a strongly mixing $\times A$-invariant measure, by Theorem~\ref{fourier coefficient} (3), we have
\begin{eqnarray*}
\lim_{j\rightarrow \infty} \hat{\mu}(A^jk+B_ik)=\hat{\mu}(k)\hat{\mu}(B_ik).
\end{eqnarray*}
Owing to $\mu$ being $\times(A^j+B_i)$-invariant for all $j\in E$, by Lemma~\ref{lemma:inv} one has $\hat{\mu}(A^jk+B_ik)=\hat{\mu}(k)$ for all $j\in E$. Consequently, $\hat{\mu}({k})=\hat{\mu}(k)\hat{\mu}(B_ik)$, which implies $\hat{\mu}(B_ik)=1$. From Lemma~\ref{atomic} we get that $\mu$ is finitely supported.

Suppose $\mu$ is a measure on $\Tb^n$ satisfying (2) or (3) of Theorem \ref{rigidity on torus}. If $\mu$ is not a Lebesgue measure, then $\mu$ is finitely supported. According to Lemma~\ref{dirac}, we conclude that $\mu$ is a Dirac measure on $\Tb^n$.
\end{proof}

\begin{proof}[Proof of Corollary~\ref{C-rigidity}]\

Take a nonzero $B$ in $M_n(\Zb)$ with $P_B(t)$ irreducible in $\Qb[t]$~(see Remark~\ref{remark: existence}). Then $B$ is strongly independent over $\Qb$ by Theorem~\ref{strongly independent matrix}.  The multiplicative semigroup $S$ generated by $\{B, B^j+B^i\}_{0\leq i\leq n-1, j\geq 1}$, where we put $B^0=I_n$, is what we need.
\end{proof}

\section*{Acknowledgement}We thank helpful comments from Huaxin Lin, Kunyu Guo, Wenming Wu, Shengkui Ye and Yi Gu.


\begin{thebibliography}{999}

\small
\bibitem[B]{B}
T. Bewley. Extension of the Birkhoff and von Neumannn ergodic theorems to semigroup actions. {\it Ann. Inst. H. Poincar\'{e} Sect. B (N.S.)} {\bf 7} (1971), 283--291.

\bibitem[F]{F}
H. Furstenberg. Disjointness in ergodic theory, minimal sets, and a problem in Diophantine approximation. {\it Math. Systems Theory} {\bf 1} (1967), 1--49.

\bibitem[H]{H}
H. Huang. Fourier coefficients of $\times p$-invariant measures. {\it J. Mod. Dyn.} {\bf 11} (2017), 551--562.

\bibitem[J]{J}
A. S. A. Johnson. Measures on the circle invariant under multiplication by a nonlacunary subsemigroup of the integers. {\it Israel J. Math.} {\bf 77} (1992), 211--240.

\bibitem[KK]{KK}
B. Kalinin and A. Katok. Invariant measures for actions of higher rank abelian groups. {\it Smooth Ergodic Theory and its Applications (Seattle, WA, 1999)}, 593–-637, Proc. Sympos. Pure Math., {\bf 69}, Amer. Math. Soc., Providence, RI, 2001.

\bibitem[KS1]{KS1}
 A. Katok and R. J. Spatzier. Invariant measures for higher-rank hyperbolic abelian actions. {\it Ergodic Theory Dynam. Systems} {\bf 16} (1996), 751--778.

\bibitem[KS2]{KS2}
A. Katok and R. J. Spatzier. Corrections to ``Invariant measures for higher-rank hyperbolic abelian actions''. {\it Ergodic Theory Dynam. Systems} {\bf 18} (1998), 503--507.


\bibitem[La]{Lang}
S. Lang. {\it Algebra.} Revised third edition. Graduate Texts in Mathematics, {\bf 211}. Springer-Verlag, New York, 2002.

\bibitem[L]{L}
R. Lyons. On measures simultaneously 2- and 3-invariant. {\it Israel J. Math.} {\bf 61} (1988), 219--224.

\bibitem[Ro]{Rotman}
S. Roman. {\it Advanved Linear Algebra}. Third edition. Graduate Texts in Mathematics, {\bf 135}. Springer, New York, 2008.

\bibitem[R]{R}
D. J. Rudolph. $\times 2$ and $\times 3$ invariant measures and entropy. {\it Ergodic Theory Dynam. Systems} {\bf 10} (1990), 395--406.

\bibitem[W]{Walters}
P. Walters. {\it An Introduction to Ergodic Theory.} Graduate Texts in Mathematics, {\bf 79}. Springer-Verlag, New York-Berlin, 1982.

\end{thebibliography}
\end{document}